\newtheorem{theorem}{Theorem}[section]
\newtheorem{lemma}[theorem]{Lemma}
\newtheorem{corollary}[theorem]{Corollary}
\theoremstyle{definition}
\newtheorem{remark}[theorem]{Remark}
\newtheorem{definition}[theorem]{Definition}
\newtheorem*{example}{Example}
\begin{document}

\begin{center}

\Large
\textbf{An upper bound of a
generalized upper Hamiltonian number of a graph}

\end{center}

\begin{center}
\large
Martin Dz\'urik
\end{center}

\begin{center}
\textit{Department of Mathematics and statistics, Faculty of Science,
Masaryk University, Kotl\'a\v rsk\'a 2, CZ-61137 Brno, Czech Republic}
\end{center}

\begin{center}

\end{center}
\begin{center}

\end{center}

\begin{center}
\textbf{Abstract}
\end{center}

In this article we study graphs with ordering of vertices, 
we define a generalization called a pseudoordering, and for a graph $H$ we define the  
$H$-Hamiltonian number of a graph $G$. We will show that this concept 
is a generalization of both the Hamiltonian number and the traceable number. 
We will prove equivalent characteristics of an isomorphism of graphs $G$ and $H$ using
$H$-Hamiltonian number of $G$.
Furthermore, we will show that 
for a fixed number of vertices, each path has a maximal upper $H$-Hamiltonian 
number, which is a generalization of the same claim for upper Hamiltonian 
numbers and upper traceable numbers. 
Finally we will show that for every connected graph $H$ 
only paths have maximal $H$-Hamiltonian number.

\newpage

\section{Introduction}

In this article we study a part of graph theory based on an ordering of vertices. We define a generalization called a pseudoordering of a graph.
We will show how to generalize a Hamiltonian number, for a graph $H$ we define the  
$H$-Hamiltonian number of a graph $G$ and we will show that this concept 
is a generalization of both the Hamiltonian number and the traceable number. We get them by a special choice of graph $H$. 
Furthermore, we will study a maximalization of upper $H$-Hamiltonian number for a fixed number of vertices. We will show that,
for a fixed number of vertices, each path has a maximal upper $H$-Hamiltonian 
number. From the definition it will be obvious that a lower bound of the $H$-Hamiltonian number is the number of edges $|E(H)|$ and the graph $G$ has a minimal lower $H$-Hamiltonian number if and only if $H$ is a subgraph of $G$. Now we can say that $G$ having a maximal upper $H$-Hamiltonian number is dual to $H$ being a subgraph of $G$. Furthermore, by above for every two finite graphs $G$ and $H$ such that $G$ is connected satisfying $|V(G)|=|V(H)|$ and 
$|E(G)|=|E(H)|$, we get that $G\cong H$ if and only if the lower $H$-Hamiltonian number of $G$ is $|E(H)|$.

In \cite{2} it is proved that $G$ has a maximal upper traceable number if and only if $G$ is a path. The same is proved for Hamiltonian number. We will show that for $H$ connected $G$ has a maximal $H$-Hamiltonian number if and only if $G$ is a path.
This shows that this generalization of ordering of vertices is natural. 

This aricle is based on the bachelor thesis
\cite{bak}. The author would like to thank Jiří Rosický for many helpful discussions.
\begin{center}

\end{center}

In this article we will study a generalization of Hamiltonian { spectra} of
undirected finite graphs.
Recall that,
a graph $G$ is {a} pair 
\[
G=(V(G),E(G)),
\]
where $V(G)$ is {a} finite set of vertices of $G$ and $E(G)\subseteq V(G) \times V(G)$,
a {symmetric} antireflexive relation, {is a set of edges}. {We will denote an edge between $v$ and $u$ by $\{v,u\}$.}

Recall that,
an {\it ordering} on {the} graph $G$ is a bijection $f\,:\,\{1,2, \dots, |V(G)|\}\rightarrow V(G)$, we denote

\[
 s(f,G)=\sum_{i=1}^{|V(G)|} \rho_G (f(i),f(i+1)), 
\]
\[
 \bar{s}(f,G)=\sum_{i=1}^{|V(G)|-1} \rho_G (f(i),f(i+1)), 
\]  where
$\rho_G (x,y)$ {is the} distance of $x,y$ in {the} graph $G$ and
$f(|V(G)|+1):=f(1)$,
for better notation. We will write only $s(f)$, $\bar{s}(f)$ if {the} graph is clear from 
{context}. Then 

\[
\{s(f,G)|f~\text{ordering on}~ G \}
\]
\[
\{\bar{s}(f,G)|f~\text{ordering on}~ G \}
\]
{are} {the} {\it Hamiltonian spectrum} of {the} graph $G$ and {the} {\it traceable spectrum} of {the} graph $G$, {respectively}.

We want to {generalize} {the notion of an ordering} of {a} graph.

\begin{definition}
Let $G,H$ {be} graphs such that $|V(G)|=|V(H)|$ and \\ $f\,:\,V(H)\rightarrow V(G)$ {is a}
bijection, then we call $f$ {a} {\it pseudoordering} on {the} graph $G$ (by $H$), 
denote 
\[ 
s_H(f,G)=\sum_{\{x,y\}\in E(H)}\rho_G(f(x),f(y)),
\] 
where
$\rho_G (x,y)$ is {the} distance of $x,y$ in {the} graph $G$. 
We will call $s_H(f,G)$ {the} sum of {the} pseudoordering $f$. Then 

\[
\{s_H(f,G)|f~\text{pseudoordering on}~ G \text{ by $H$}\}
\]
is {the} {\it $H$-Hamiltonian spectrum} of {the} graph $G$.

\end{definition}

{The} minimum and {the} maximum of {a} Hamiltonian spectrum and {of a} traceable spectrum are
called {the} {\it (lower) Hamiltonian number} {and the} {\it upper Hamiltonian number}, {respectively}. {Furthermore, the} (lower)
traceable number and {the} upper traceable number of {a} graph $G$ {are} denoted by

\begin{equation}
\begin{aligned}
h(G)&=\min\{s(f,G)|f ~\text{ordering on}~ G \},\\
h^+(G)&=\max\{s(f,G)|f ~\text{ordering on}~ G \},\\
t(G)&=\min\{\bar{s}(f,G)|f ~\text{ordering on}~ G \},\\
t^+(G)&=\max\{\bar{s}(f,G)|f ~\text{ordering on}~ G \}.
\end{aligned}\notag
\end{equation}
Now we {define generalized} versions.

\begin{definition}
\begin{equation}
\begin{aligned}
h_{H}(G)&=\min\{s_H(f,G)|f ~\text{pseudoordering on}~ G \},\\
h_{H}^+(G)&=\max\{s_H(f,G)|f ~\text{pseudoordering on} ~ G \}.
\end{aligned}\notag
\end{equation}
We will call them {the} {\it lower $H$-Hamiltonian number} and {the} {\it upper $H$-Hamiltonian number} of {a} graph $G$, {respectively}.
\end{definition}

Now take $H=C_{|V(G)|}$, where $C_n$ is {the} cycle with $n$ {vertices}. When we 
denote {the} vertices of $C_{|V(G)|}$ by $\{1,2,\dots,|V(G)|\} $
we can see {that}
\[
s(f,G)=s_{C_{|V(G)|}}(f,G).
\]

{Analogously} for $H=P_{|V(G)|-1}$, where $P_{n-1}$ is {the} path of {length} $n-1$, {we get that}

\[
\bar{s}(f,G)=s_{P_{|V(G)|-1}}(f,G).
\]

\begin{remark}
{The} $C_{|V(G)|}$-Hamiltonian spectrum of {a graph} $G$ is equal to {the} Hamiltonian spectrum of
$G$ for $|V(G)|\geq 3 $, and {the} $ P_{|V(G)|-1}$-Hamiltonian spectrum of $G$ is equal to {the} traceable
spectrum of $G$ for $|V(G)|\geq 2 $.
\end{remark}

\begin{lemma}
Let $G$ be a connected finite graph and $H$ {be} a graph such that
$|V(G)|=|V(H)|$, {then} $h_H(G)= |E(H)|$ if and only if $H$ is isomorphic to some subgraph of $G$.
\end{lemma}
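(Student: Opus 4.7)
The plan is to prove both directions by reducing equality in the natural lower bound $s_H(f,G) \geq |E(H)|$ to the condition that $f$ sends every edge of $H$ to an edge of $G$.

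First I would establish the lower bound. Because $G$ is connected, $\rho_G(u,v)$ is a positive integer whenever $u\neq v$, and since graphs here are antireflexive, for any pseudoordering $f$ and any edge $\{x,y\}\in E(H)$ we have $f(x)\neq f(y)$, hence $\rho_G(f(x),f(y))\geq 1$. Summing over $E(H)$ gives
\[
s_H(f,G)=\sum_{\{x,y\}\in E(H)}\rho_G(f(x),f(y))\geq |E(H)|,
\]
so $h_H(G)\geq |E(H)|$. Moreover, equality holds in the sum for a particular $f$ if and only if $\rho_G(f(x),f(y))=1$ for every $\{x,y\}\in E(H)$, i.e., if and only if $\{f(x),f(y)\}\in E(G)$ whenever $\{x,y\}\in E(H)$.

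Now I would use this characterization to prove both implications. For the forward direction, suppose $h_H(G)=|E(H)|$ and let $f$ achieve the minimum. By the observation above, $f$ is a bijection $V(H)\to V(G)$ that maps $E(H)$ into $E(G)$; thus the image graph $H':=(V(G),\{\{f(x),f(y)\}:\{x,y\}\in E(H)\})$ is a subgraph of $G$ isomorphic to $H$ via $f$. For the reverse direction, suppose $H$ is isomorphic to a subgraph $H'$ of $G$. Since $|V(H')|=|V(H)|=|V(G)|$, the subgraph $H'$ is spanning, so any isomorphism $\varphi:H\to H'$ is a bijection $V(H)\to V(G)$, i.e., a pseudoordering. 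For every $\{x,y\}\in E(H)$ we have $\{\varphi(x),\varphi(y)\}\in E(H')\subseteq E(G)$, so $\rho_G(\varphi(x),\varphi(y))=1$ and $s_H(\varphi,G)=|E(H)|$. Combined with the lower bound, this forces $h_H(G)=|E(H)|$.

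I do not anticipate any real obstacle here; the only subtle point is making explicit that the hypothesis $|V(H)|=|V(G)|$ forces any copy of $H$ inside $G$ to be spanning, so that an isomorphism onto such a copy is automatically a pseudoordering in the sense of the definition. The use of connectedness of $G$ is exactly what guarantees the floor value $1$ for each summand and is indispensable for the lower bound.
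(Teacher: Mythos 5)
Your proof is correct and follows the same route as the paper: equality in the lower bound $s_H(f,G)\geq|E(H)|$ forces every edge of $H$ to map to an edge of $G$, so a minimizing $f$ is an injective graph homomorphism (hence an isomorphism onto a spanning subgraph), and conversely. The paper states this in one line; you have merely supplied the details, including the correct observation that connectedness of $G$ is what makes each summand at least $1$.
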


\begin{proof}
Let $f\,:\,V(H)\rightarrow V(G)$ be a pseudoordering satisfying 
$s(f,G)= |E(H)|$, then $f$ 
is an injective graph homomorphism. 
The opposite implication is obvious.
\end{proof}

\begin{lemma}
Let $G$ be a connected finite graph and $H$ {be} a graph such that
$|V(G)|=|V(H)|$ and $|E(G)|=|E(H)|$, {then} $h_H(G)= |E(H)|$ if and only if $H$ is isomorphic to the graph $G$.
\end{lemma}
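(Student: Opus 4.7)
The plan is to reduce this statement directly to the previous lemma. That lemma gives $h_H(G) = |E(H)|$ if and only if $H$ is isomorphic to some subgraph of $G$, so it suffices to promote ``some subgraph of $G$'' to ``$G$ itself'' using the extra hypothesis that $G$ and $H$ have the same number of vertices and the same number of edges.

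For the forward direction, I would assume $h_H(G) = |E(H)|$. By the previous lemma there is a subgraph $G'$ of $G$ with $H \cong G'$. Then $|V(G')| = |V(H)| = |V(G)|$, so $V(G') = V(G)$, and $|E(G')| = |E(H)| = |E(G)|$. Since $E(G') \subseteq E(G)$ and both sets are finite with the same cardinality, I conclude $E(G') = E(G)$. Hence $G' = G$, and therefore $H \cong G$.

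For the backward direction, if $H \cong G$ then $H$ is in particular isomorphic to a subgraph of $G$ (namely $G$ itself), so the previous lemma gives $h_H(G) = |E(H)|$.

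There is no real obstacle here: the only nontrivial content is the counting argument that a subgraph with the same number of vertices and edges as the ambient graph must coincide with it, which is immediate from $V(G') \subseteq V(G)$ and $E(G') \subseteq E(G)$ together with the cardinality constraints. Everything else is a direct invocation of the preceding lemma.
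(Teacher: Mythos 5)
Your proposal is correct and follows the same route as the paper: invoke the preceding lemma to get that $H$ is isomorphic to a subgraph of $G$, then use the equalities $|V(G)|=|V(H)|$ and $|E(G)|=|E(H)|$ to force that subgraph to be all of $G$. Your write-up just makes the counting step more explicit than the paper does.
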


\begin{proof}
The graph $H$ is isomorphic to a subgraph of $G$, and furthermore $|V(G)|=|V(H)|$, $|E(G)|=|E(H)|$, hence $H\cong G$.
The opposite implication is obvious.
\end{proof}

\section{Maximalization of the upper $H$-Hamiltonian number of a graph $G$}

In this section we will prove {that} for every {pair of} connected graphs $H,G$ and {each} pseudoordering $f$ 
{there} exists {a} pseudoordering \\ $g:V(H)\rightarrow \{1,2,\dots,
|V(G)|\}$ {such that} $s_H(f,G) \leq s_H(g,P_{|V(G)|-1})$.
At first, let $G$ be a tree. We will only work with graphs which have al least 2 vertices.

\begin{definition}

Let $G$ and $H$ be graphs such that $G$ is connected, \\ $|V(G)|~=~|V(H)|$ and 
$f:V(H)\rightarrow V(G)$ {is a} pseudoordering. {Furthermore}, let $a,b \in V(G)$, we define
$a\sim_{H,f} b$ if and only if $\{f^{-1}(a),f^{-1}(b)\} \in E(H)$.
\end{definition}

\begin{definition}
{Let} $G$ be a tree {such that} $G$ is not a path. Denote
three {pairwise distinct leaves by} $l,k,v\in
V(G)$. Because $G$ is not a path than $G$ has at least $3$ leaves,
connect $l,k$ with a path $l=x_1,x_2,\dots, x_m=k$. {Connect $v,l$ with a path} $l$ $\quad v=y_1,y_2,\dots ,y_s=l$ and take {the} minimum of a set
\[
i_{m}=\min\{i|\exists j \in \{1,\dots ,m \},y_i=x_j\}.
\]
Take $j_{m}$ {such that} $y_{i_m}=x_{j_m}$.
Now we define $ u=y_{i_m}$, $w=y_{i_m-1}$, $u^{+}=x_{j_m-1}$, $u^{-}=x_{j_m+1}$.
\end{definition}

\begin{example}

\begin{flushleft}
\end{flushleft}

\begin{center}
\definecolor{qqqqff}{rgb}{0,0,1}
\definecolor{xdxdff}{rgb}{0.49,0.49,1}
\begin{tikzpicture}[line cap=round,line join=round,>=triangle 45,x=1.0cm,y=1.0cm]
\clip(-3.57,-2.22) rectangle (3.33,3.38);
\draw (0,3)-- (0,2);
\draw (0,2)-- (0,1);
\draw (0,1)-- (0,0);
\draw (0,0)-- (0,-1);
\draw (0,-1)-- (0,-2);
\draw (0,0)-- (-1,0);
\draw (-1,0)-- (-2,0);
\draw (-2,0)-- (-3,0);
\draw (-2,0)-- (-2.32,0.48);
\draw (-2.32,0.48)-- (-2.72,0.76);
\draw (-1,0)-- (-1.42,0.72);
\draw (-1.42,0.72)-- (-1.88,1.16);
\draw (0,1)-- (0.84,1.28);
\draw (0.84,1.28)-- (1.56,1.2);
\draw (0,-1)-- (0.88,-0.86);
\draw (0.88,-0.86)-- (1.82,-0.56);
\draw (0,0)-- (0.66,0.4);
\draw (0.66,0.4)-- (1.58,0.44);
\draw (0.84,1.28)-- (1.14,1.78);
\draw (0,2)-- (0.82,2.44);
\draw (1.58,0.44)-- (2.48,0.44);
\draw (-2,0)-- (-2.14,-0.8);
\draw (0.07,1.7) node[anchor=north west] {$ u^+ $};
\draw (0.09,-0.3) node[anchor=north west] {$u^-$};
\draw (0.05,0.7) node[anchor=north west] {$u$};
\begin{scriptsize}
\fill [color=black] (0,3) circle (1.5pt);
\draw[color=black] (0.06,3.15) node {$l$};
\fill [color=xdxdff] (0,2) circle (1.5pt);
\fill [color=black] (0,1) circle (1.5pt);
\fill [color=black] (0,0) circle (1.5pt);
\fill [color=black] (0,-2) circle (1.5pt);
\draw[color=black] (0.09,-1.85) node {$k$};
\fill [color=black] (-1,0) circle (1.5pt);
\draw[color=black] (-0.9,0.15) node {$w$};
\fill [color=xdxdff] (-2,0) circle (1.5pt);
\fill [color=black] (-3,0) circle (1.5pt);
\draw[color=black] (-2.92,0.15) node {$v$};
\fill [color=qqqqff] (-2.32,0.48) circle (1.5pt);
\fill [color=qqqqff] (-2.72,0.76) circle (1.5pt);
\fill [color=qqqqff] (-1.42,0.72) circle (1.5pt);
\fill [color=qqqqff] (-1.88,1.16) circle (1.5pt);
\fill [color=qqqqff] (0.84,1.28) circle (1.5pt);
\fill [color=qqqqff] (1.56,1.2) circle (1.5pt);
\fill [color=qqqqff] (0.88,-0.86) circle (1.5pt);
\fill [color=qqqqff] (1.82,-0.56) circle (1.5pt);
\fill [color=qqqqff] (0.66,0.4) circle (1.5pt);
\fill [color=qqqqff] (1.58,0.44) circle (1.5pt);
\fill [color=qqqqff] (1.14,1.78) circle (1.5pt);
\fill [color=qqqqff] (0.82,2.44) circle (1.5pt);
\fill [color=qqqqff] (2.48,0.44) circle (1.5pt);
\fill [color=qqqqff] (-2.14,-0.8) circle (1.5pt);
\fill [color=black] (0,-1) circle (1.5pt);
\end{scriptsize}
\end{tikzpicture}
\end{center}

\end{example}

\begin{remark}
$l\neq u \neq k$.
\end{remark}

\begin{definition}

Define a set $K(v,G) \subseteq V(G)$ as a set of vertices $z \in V(G)$ 
such the path between $z$ and $l $ {uses the} edge $\{w,u\}$. 
\end{definition}

\begin{remark}
$K(v,G)$ is the connected component of
$(V(G),E(G)\setminus \{w,u\})$, $G$ without edge $\{w,u\}$, which contains $v$.
\end{remark}

\begin{lemma}\label{l8}
\begin{enumerate}[(i)]
\item Paths between vertices from $K(v,G)$ don't use {the} edge $\{w,u\}$.
\item Paths between vertices from $V(G) \setminus K(v,G)$ don't use {the} edge
$\{w,u\}$.
\item Paths {joining a vertex} from $V(G) \setminus K(v,G)$ {to a vertex from} $K(v,G)$ use {the}
edge $\{w,u\}$.

\end{enumerate}
\end{lemma}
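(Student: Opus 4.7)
The plan is to combine two observations. First, because $G$ is a tree, the edge $\{w,u\}$ is a bridge, so the graph $G' := (V(G), E(G)\setminus\{\{w,u\}\})$ is a forest with exactly two connected components; by the preceding remark these components are $K(v,G)$ and $V(G)\setminus K(v,G)$. Second, since $G$ is a tree, between any two vertices of $G$ there is a unique path.

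Parts (i) and (ii) then follow from a single argument. If $a$ and $b$ lie in the same connected component $C$ of $G'$, then the connectivity of $C$ supplies a path from $a$ to $b$ that avoids the edge $\{w,u\}$; this path also lies in $G$, and by uniqueness of paths in the tree $G$ it must coincide with the one and only $a$--$b$ path of $G$. Taking $C = K(v,G)$ yields (i) and taking $C = V(G)\setminus K(v,G)$ yields (ii).

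For part (iii), let $a \in V(G)\setminus K(v,G)$ and $b \in K(v,G)$. If the unique $a$--$b$ path in $G$ did not traverse $\{w,u\}$, it would lie entirely in $G'$, forcing $a$ and $b$ to belong to a common connected component of $G'$ and contradicting the choice of $a$ and $b$. Hence the path must use $\{w,u\}$.

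The whole statement is really just the bridge-separation property of trees, so the only real obstacle is notational: one must identify $K(v,G)$ (defined via the distinguished leaf $l$) with the $v$-component of $G'$, and this identification has already been carried out in the remark immediately preceding the lemma.
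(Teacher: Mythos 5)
Your proof is correct, but it takes a different route from the paper's. You reduce everything to the bridge-separation property: $\{w,u\}$ is a bridge of the tree $G$, so $G-\{w,u\}$ splits into exactly two components, and the unique path between two vertices of a tree traverses a given edge precisely when that edge separates them; combined with the identification of $K(v,G)$ as the $v$-component (the remark preceding the lemma), all three parts drop out at once. The paper instead argues directly from the definition of $K(v,G)$ in terms of paths to the leaf $l$: for each pair $a,b$ it routes both vertices to $l$, takes the last common vertex $\tilde{a}$ of the two paths, splices the two tails at $\tilde{a}$ to obtain the unique $a$--$b$ path, and then checks case by case whether the spliced path contains $\{w,u\}$. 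Your version is shorter and conceptually cleaner; the paper's is more self-contained, since the remark you lean on is itself stated without proof and is logically of the same depth as the lemma (both are instances of the same separation fact). To make your argument fully self-contained you should record that fact explicitly --- if the unique $a$--$b$ path avoids $e$ it survives in $G-e$, so $a,b$ lie in one component, and conversely any $a$--$b$ path in $G-e$ is, by uniqueness, the $G$-path --- and note that $v\in K(v,G)$ (the $v$--$l$ path passes through $w$ and then $u$), so that $K(v,G)$ really is the component containing $v$. With that one-line addition nothing is missing.
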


\begin{proof}

\begin{enumerate}[(i)]

\item If $a,b\in K(v,G),~ a\neq b$ connect them with a path with $l $ \\ $ l=a_1,a_2,\dots ,a_p=a $
and
$l=b_1,b_2,\dots ,b_q=b$. We know {that} $a_i\neq b_j$ {for} $i \neq j$, {a} path
between two vertices in {a} tree is {uniquely determined}.
Consider the last common {vertex} $\tilde{a}=a_{max\{i|a_i=b_i\}}$. We know {that}
paths between $a$, $l$ and $b$, $l$
use {the} edge $\{w,u\}$. Thus they still coincide on it, {therefore} paths between $\tilde{a}$,$a$
and $\tilde{a}$,$b$
don't use $\{w,u\}$ and except for $\tilde{a}$ are disjoint.
We can join them and we get {a} path between $a$ and $ b $ which doesn't use $\{w,u\}$.

\item Let $a,b\in V(G)\setminus K(v,G)\quad a\neq b${,} analogically we take
paths to $l$,\\ $ l=a_1,a_2,\dots ,a_p=a $ and
$l=b_1,b_2,\dots ,b_q=b$,
$\tilde{a}=a_{max\{i|a_i=b_i\}}$. Paths {between} $a$,$l$ and $b$,$l$
don't use edge $\{w,u\}$, then paths $\tilde{a}$,$a$ and $\tilde{a}$,$b$
don't use $\{w,u\}$ and they are disjoint up to end point. Then we have {a} path between $a$, $ b $
which doesn't use $\{w,u\}$.

\item Let $a\in K(v,G),b\in V(G)\setminus K(v,G)$, connect them with $l
\quad$ \\ $ l=a_1,a_2,\dots, a_p=a $ and
$l=b_1,b_2,\dots ,b_q=b \quad$,
$\tilde{a}=a_{max\{i|a_i=b_i\}}$. {The} path between $a$,$l$
{doesn't} use $\{w,u\}$ and {the} path between $b$,$l$
{uses} $\{w,u\}$. Thus the path between $\tilde{a}$,$a$ {uses} that edge and
{the} path {between} $\tilde{a}$,$b$ 
doesn't use $\{w,u\}$ and they are disjoint up to end point.
Then we have {a} path between $a$, $ b $
which {uses} $\{w,u\}$.

\end{enumerate}

\end{proof}

\begin{definition}

Define graphs \[
\bar{G}=(V(G),E(G)\setminus \{\{w,u\}\} \cup \{\{w,l\}\}), \]
\[ \tilde{G}=(V(G),E(G)\setminus \{\{w,u\}\} \cup \{\{w,k\}\}).
\]
\end{definition}

\begin{lemma}\label{l9}

$\bar{G}$ {and} $\tilde{G}$ are trees.
\end{lemma}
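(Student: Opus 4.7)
The plan is to show that each of $\bar{G}$ and $\tilde{G}$ is obtained from the tree $G$ by deleting one edge and adding a genuinely new edge that reconnects the two resulting components. Since $G$ has exactly $|V(G)|-1$ edges, any such modification yields a graph on $|V(G)|$ vertices with $|V(G)|-1$ edges; if it is also connected, it must be a tree.

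First I would analyze what deleting $\{w,u\}$ does to $G$. By the remark following the definition of $K(v,G)$, the graph $(V(G), E(G)\setminus\{\{w,u\}\})$ has exactly two connected components, namely $K(v,G)$ and its complement $V(G)\setminus K(v,G)$. Note that $w\in K(v,G)$ (since the path $v=y_1,\dots,y_{i_m-1}=w$ does not use $\{w,u\}$), while $u\in V(G)\setminus K(v,G)$. Moreover, since the unique $l$-to-$k$ path $x_1,\dots,x_m$ passes through $u=x_{j_m}$ and does not use the edge $\{w,u\}$ (which is not on this path), both $l$ and $k$ lie in $V(G)\setminus K(v,G)$, the same component as $u$.

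Next I must check that $\{w,l\}$ (respectively $\{w,k\}$) is not already an edge of $G$; otherwise the edge set in the definition would not grow, and the resulting graph would have only $|V(G)|-2$ edges and fail to be a tree. By the remark $l\neq u\neq k$, we have $j_m\geq 2$ and $i_m\geq 2$, so in $G$ the unique $w$-to-$l$ path goes $w=y_{i_m-1},u=y_{i_m}=x_{j_m},x_{j_m-1},\dots,x_1=l$ and has length $j_m\geq 2$; hence $\{w,l\}\notin E(G)$. The same argument with the $k$-side of the path shows $\{w,k\}\notin E(G)$.

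Finally, since $w\in K(v,G)$ and $l,k\in V(G)\setminus K(v,G)$, adding the edge $\{w,l\}$ (or $\{w,k\}$) to $(V(G), E(G)\setminus\{\{w,u\}\})$ joins the two components into one, so $\bar{G}$ and $\tilde{G}$ are connected. Together with the edge count $|V(G)|-1$, this gives that both are trees. The only mildly delicate step is the non-edge check in the previous paragraph, which is really the place where the careful choice of $u$ as a first common vertex (guaranteeing $u\notin\{l,k\}$) is used.
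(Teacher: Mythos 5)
Your proof is correct, but it takes a genuinely different route from the paper. The paper establishes connectivity of $\bar{G}$ and $\tilde{G}$ by explicitly constructing a path between any two vertices (splitting into cases according to membership in $K(v,G)$ and invoking Lemma \ref{l8}), and then rules out cycles by contradiction: a cycle avoiding the new edge would be a cycle in the tree $G$, while a cycle through $\{w,l\}$ would yield a path in $G$ between $w\in K(v,G)$ and $l\in V(G)\setminus K(v,G)$ avoiding $\{w,u\}$, contradicting Lemma \ref{l8}(iii). You instead use the characterization ``connected with $|V(G)|-1$ edges $\Rightarrow$ tree'': deleting $\{w,u\}$ from the tree $G$ leaves exactly the two components $K(v,G)\ni w$ and $V(G)\setminus K(v,G)\ni l,k$, so adding $\{w,l\}$ or $\{w,k\}$ restores connectivity, and the edge count is preserved. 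This forces you to verify a point the paper's argument never needs to make explicit, namely that $\{w,l\}$ and $\{w,k\}$ are not already edges of $G$; your check via $\rho_G(w,l)=j_m\geq 2$ (using $u\neq l$, $u\neq k$ from the remark) is exactly right and is where the choice of $u$ as the first common vertex enters. The net effect is a shorter and arguably cleaner proof that trades the paper's explicit path constructions for one careful non-adjacency verification; both arguments rest on the same structural fact that $G-\{w,u\}$ splits into $K(v,G)$ and its complement.
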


\begin{proof}

At first we show connectivity, let $a,b \in V(G)$, connect them with a path. If both are in $K(v,G)$
or in $V(G)
\setminus K(v,G)$, then by lemma \ref{l8}, the path in $G$ {uses} only {edges} which
are also in $\bar{G},\tilde{G}$. {Hence} it is path also there.

{Let} $a \in K(v,G)$ {and} $b \in V(G)
\setminus K(v,G)$. We can see $w\in K(v,G)$, by lemma \ref{l8} {a} path
between $a$ and
$w, \quad a=a_1,a_2, \dots ,a_p=w$, {doesn't} use $\{w,u\}$ and all vertices of
this path are in $K(v,G)$. If {not}, there is {a} path between vertices from $K(v,G)$
and $V(G)
\setminus K(v,G)$ which {doesn't} use $\{w,u\}$, that is {a} contradiction with
lemma \ref{l8}.
{Connect $l$ and $b$ with a path},$\quad l=b_1,b_2, \dots ,b_q=b$. It {doesn't} use $\{w,u\}$  
and all vertices {are} in $V(G)\setminus
K(v,G)$. Then $a=a_1,a_2, \dots
,a_p=w,l=b_1,b_2, \dots ,b_q=b$ is a path between $a,b$ in {the} graph $\bar{G}$,
analogically for $\tilde{G}$.

Now we show {that} they don't contain {a} cycle, for contradiction {suppose that} $\bar{G}$ contains {a}
cycle $K\subseteq \bar{G}$. If $K$ {doesn't} use {the} edge $\{w,l\}$, {then} 
$K\subseteq G$, but $G$ is {a} tree, {this is a contradiction}. If $K$ {uses}
$\{w,l\}$, {then there exists a path in $G$ between $w,l$}, which {doesn't} use {the}  
edge $\{w,l\}$. {Then there exists a path in $G$ between $w,l$}, which {doesn't} use {the}  
edge $\{w,u\}$, but $w\in K(v,G)$ and $l\in V(G) \setminus K(v,G)$, that is
contradiction with lemma \ref{l8}. Analogically for $\tilde{G}$.

\end{proof}

We want to show that 
\[
s_H(G,f)\leq s_H(\bar{G},f)
\]
or
\[
s_H(G,f)\leq s_H(\tilde{G},f)
\]

\begin{lemma}\label{l10}

\[a,b\in K(v,G),\phantom{\setminus V(G)} \quad\mathrm{then}
\quad\rho_{G}(a,b)=\rho_{\bar{G}}(a,b)=\rho_{\tilde{G}}(a,b),\]

\[a,b\in V(G)\setminus K(v,G), \quad\mathrm{then}
\quad \rho_{G}(a,b)=\rho_{\bar{G}}(a,b)=\rho_{\tilde{G}}(a,b).\]

\end{lemma}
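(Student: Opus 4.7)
The plan is to leverage the fact that all three graphs $G$, $\bar{G}$, $\tilde{G}$ are trees (Lemma \ref{l9}), so between any two vertices there is a unique path, and the distance is simply the length of that path. The key observation is that both $\bar{G}$ and $\tilde{G}$ are obtained from $G$ by removing the edge $\{w,u\}$ and adding one new edge; in particular,
\[
E(G)\setminus\{\{w,u\}\} \subseteq E(\bar{G}) \cap E(\tilde{G}).
\]

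For the first case, let $a,b\in K(v,G)$. By Lemma \ref{l8}(i), the unique path from $a$ to $b$ in $G$ does not use the edge $\{w,u\}$. Hence every edge of this path lies in $E(G)\setminus\{\{w,u\}\}$, and by the inclusion above it is also a path from $a$ to $b$ in $\bar{G}$ and in $\tilde{G}$. Since $\bar{G}$ and $\tilde{G}$ are trees, this must be \emph{the} unique $a$-$b$ path in each of them, and therefore its length, which equals $\rho_G(a,b)$, equals $\rho_{\bar{G}}(a,b)$ and $\rho_{\tilde{G}}(a,b)$ as well.

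The second case, $a,b\in V(G)\setminus K(v,G)$, is entirely analogous: invoke Lemma \ref{l8}(ii) instead of (i) to conclude that the unique $a$-$b$ path in $G$ avoids $\{w,u\}$, and then repeat the same argument.

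There is no real obstacle here once Lemmas \ref{l8} and \ref{l9} are in place; the only subtle point is to notice that the uniqueness of paths in trees lets us upgrade ``this path still exists in $\bar{G}$ and $\tilde{G}$'' to ``this path realizes the distance in $\bar{G}$ and $\tilde{G}$'', which would otherwise require showing no strictly shorter path appears after the edge swap. The really substantive lemma will be the complementary case where $a$ and $b$ lie on opposite sides of the partition, but that is not part of the present statement.
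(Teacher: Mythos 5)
Your proof is correct and follows essentially the same route as the paper's: apply Lemma \ref{l8}(i) or (ii) to see that the $a$-$b$ path in $G$ avoids $\{w,u\}$, hence survives into $\bar{G}$ and $\tilde{G}$. You are slightly more careful than the paper in spelling out that uniqueness of paths in trees is what lets this surviving path actually realize the distance in the modified graphs, a point the paper leaves implicit.
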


\begin{proof}

A path in $G$ between $a,b$, by lemma \ref{l8}, doesn't use $\{u,w\}$ , {hence} it is {a} path in $\bar{G}$ and $\tilde{G}$ too, then {the} distance of $a,b$ is {the} same in $G, \bar{G}$ and $\tilde{G}$.
\end{proof}

\begin{definition}

Define subsets \[ 
F^{+},F^{-},F^{0}\subseteq K(v,G) \times (V(G)\setminus K(v,G))
\]
{such that $(a,b) \in F^{+}$ if a path between $a,b$ uses the edge
$\{u,u^{+}\}$.}
$(a,b) \in F^{-}$ if a path between $a,b$ {uses} the edge $\{u,u^{-}\}$
and $(a,b) \in F^{0}$ if a path between $a,b$ doesn't use {neither $\{u,u^{-}\}$ nor
$\{u,u^{+}\}$}.

\end{definition}

\begin{lemma}\label{l11}

$F^+,F^-,F^0$ are pairwise disjoint and \[ F^+\cup F^- \cup F^0= K(v,G) \times (V(G)\setminus
K(v,G)). \]

\end{lemma}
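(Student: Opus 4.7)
The plan is to verify the two assertions directly from the definitions of $F^+,F^-,F^0$, using uniqueness of paths in the tree $G$ together with Lemma \ref{l8}(iii).

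For disjointness, the pairs $(F^+,F^0)$ and $(F^-,F^0)$ are automatically disjoint, since $F^0$ consists precisely of those $(a,b)$ whose $a$--$b$ path uses neither of the edges $\{u,u^+\}$ and $\{u,u^-\}$. For the remaining case $F^+\cap F^-$, I would argue by contradiction: if $(a,b)$ lay in both, then the unique path from $a$ to $b$ in $G$ would traverse both $\{u,u^+\}$ and $\{u,u^-\}$, and by Lemma \ref{l8}(iii) it would also traverse $\{w,u\}$. This gives three distinct edges incident to $u$ appearing on a single simple path, which is impossible (a simple path uses at most two edges at any given vertex).

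For the covering identity, let $(a,b)\in K(v,G)\times(V(G)\setminus K(v,G))$ be arbitrary, and consider the unique $a$--$b$ path in $G$. By definition, exactly one of the following holds: the path uses $\{u,u^+\}$, in which case $(a,b)\in F^+$; the path uses $\{u,u^-\}$, in which case $(a,b)\in F^-$; or the path uses neither, in which case $(a,b)\in F^0$. Hence $F^+\cup F^-\cup F^0$ exhausts $K(v,G)\times(V(G)\setminus K(v,G))$, while the reverse inclusion is immediate from the containments $F^+,F^-,F^0\subseteq K(v,G)\times(V(G)\setminus K(v,G))$ given in the definition.

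There is really no serious obstacle here; the statement is essentially bookkeeping. The only point requiring a little care is the disjointness of $F^+$ and $F^-$, which is where Lemma \ref{l8}(iii) is genuinely used, ensuring that the forced presence of $\{w,u\}$ on the path makes simultaneous use of $\{u,u^+\}$ and $\{u,u^-\}$ incompatible with path-simplicity in the tree.
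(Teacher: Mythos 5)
Your proof is correct and follows essentially the same route as the paper: disjointness with $F^0$ is definitional, and $F^+\cap F^-=\emptyset$ is obtained by invoking Lemma \ref{l8}(iii) to force the edge $\{w,u\}$ onto the path, giving three edges of a simple path incident to $u$, a contradiction. The covering identity, which the paper leaves implicit, is the same trivial trichotomy you describe.
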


\begin{proof}

From {the} definition of $F^+,F^-,F^0$ {we} have $F^-$ and $F^0$, $F^+$ and $F^0$ are disjoint.
Let $(a,b)\in F^+ \cap F^-$, then the path between $a,b$ {uses edges}
$\{u,u^-\},\{u,u^+\}$ and by lemma~\ref{l8}, it also {uses} the edge $\{w,u\}$. Hence it is a path which {has} a 
vertex of degree $3$ and that is contradiction.

\end{proof}

\begin{lemma}\label{l12}
 
Let $x,\bar{x} \in K(v,G)$ and $y,\bar{y} \in
V(G)\setminus K(v,G)$ {such} that $(x,y) \in F^+$ and $(\bar{x},\bar{y})\in F^-$.
{Then}
 \[\rho_{\bar{G}}(x,y)+\rho_{\bar{G}}(\bar{x},\bar{y})\geq
\rho_{G}(x,y)+\rho_{G}(\bar{x},\bar{y}), 
\]
\[\rho_{\tilde{G}}(x,y)+\rho_{\tilde{G}}(\bar{x},\bar{y})\geq \rho_{G}(x,y)+\rho_{G}(\bar{x},\bar{y}).
\]
Moreover, {both} sides are equal, in {the} first inequality, if and only if $y=l$ and{, in the second inequality}, if and only if $\bar{y} =k$.

\end{lemma}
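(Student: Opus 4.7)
The plan is to evaluate both sides of each inequality exactly and reduce the question to the obvious bound $\rho_G(u^+,z)\leq \rho_G(u^+,l)$ for a single auxiliary vertex $z$. Set $p=\rho_G(u,l)=j_m-1\geq 1$ and $q=\rho_G(u,k)=m-j_m\geq 1$ (using $l\neq u\neq k$). Let $T^+$ denote the component of $G\setminus\{u,u^+\}$ containing $u^+$, and $T^-$ the component of $G\setminus\{u,u^-\}$ containing $u^-$; these are disjoint, $l\in T^+$, $k\in T^-$, and by the definitions of $F^+,F^-$ combined with lemma \ref{l8}(iii), the hypotheses force $y\in T^+$ and $\bar y\in T^-$. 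This gives the $G$-decompositions
\[
\rho_G(x,y)=\rho_G(x,w)+2+\rho_G(u^+,y),\qquad \rho_G(\bar x,\bar y)=\rho_G(\bar x,w)+2+\rho_G(u^-,\bar y).
\]

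In $\bar G$ the unique edge joining $K(v,G)$ to its complement is $\{w,l\}$, so applying lemma \ref{l10} to the pieces lying within one component yields $\rho_{\bar G}(x,y)=\rho_G(x,w)+1+\rho_G(l,y)$ and similarly for $(\bar x,\bar y)$. The distance $\rho_G(l,\bar y)$ is easy: the $G$-path goes $l\rightsquigarrow u^+,u,u^-\rightsquigarrow \bar y$, so $\rho_G(l,\bar y)=p+1+\rho_G(u^-,\bar y)$. For $\rho_G(l,y)$ with $l,y\in T^+$, I take $z$ to be the last common vertex of the $G$-paths $u^+\to l$ and $u^+\to y$ inside $T^+$; elementary tree identities then give $\rho_G(l,y)=(p-1)+\rho_G(u^+,y)-2\rho_G(u^+,z)$, where $0\leq \rho_G(u^+,z)\leq p-1$. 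Subtracting and cancelling yields
\[
\bigl[\rho_{\bar G}(x,y)+\rho_{\bar G}(\bar x,\bar y)\bigr]-\bigl[\rho_G(x,y)+\rho_G(\bar x,\bar y)\bigr]=2\bigl[(p-1)-\rho_G(u^+,z)\bigr]\geq 0,
\]
with equality iff $z=l$; since $l$ is a leaf of $G$ and hence of $T^+$, this is equivalent to $y=l$.

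The second inequality follows by the fully symmetric calculation, interchanging the roles of $l\leftrightarrow k$, $u^+\leftrightarrow u^-$, $p\leftrightarrow q$, and $(x,y)\leftrightarrow (\bar x,\bar y)$; one arrives at $2[(q-1)-\rho_G(u^-,\bar z)]\geq 0$ with equality iff $\bar y=k$. The only real obstacle is the bookkeeping needed to express $\rho_G(l,y)$ via the common-prefix length $\rho_G(u^+,z)$ and to justify, using lemma \ref{l10}, that replacing $\{w,u\}$ with $\{w,l\}$ (resp.\ $\{w,k\}$) affects only the portion of each $G$-path that crosses between $K(v,G)$ and its complement; once these ingredients are in place the comparison is purely arithmetic.
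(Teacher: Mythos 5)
Your proof is correct and takes essentially the same route as the paper's: both reduce the difference of the two sums to twice the distance from $l$ (respectively $k$) to the branch point where the path from $y$ (respectively $\bar{y}$) meets the $l$--$k$ spine, a quantity that vanishes exactly when $y=l$ (respectively $\bar{y}=k$) because $l$ and $k$ are leaves. Your derivation of the $\bar{G}$-distance formulas via the unique crossing edge $\{w,l\}$ together with Lemma \ref{l10} is a slightly tidier justification than the paper's explicit path concatenations, but the underlying decomposition and arithmetic are the same.
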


\begin{proof}

%označme $r$ prvý spoločný vrchol ciest z $x$ do $y$ a z $\bar{x}$ do $\bar{y}$
%to jest $x=x_1,x_2,\dots ,x_m=y$ a $\quad \bar{x}=y_1,y_2,\dots ,y_s=\bar{y}$ 
%uvážme minimum množiny 
%\[
%i_{m}=min\{i|\exists j \in \{1,\dots ,m \},y_i=x_j\}
%\]

%a teda $r=y_{i_m}$ lebo obe cesty využívajú hranu $\{w,u\}$ tak $r \in
%K(v,G)$, uvážme podobne $s$ prvý vrchol na tej druhej ceste a predpokladajme
%že $s \neq r$, $s$ a $r$ ide spojiť s cestou a z jednoznačnosti ciest plynie
%že je to tá ktorú využíva cesta medzi $x,y$ ale z jednoznačnosti ciest
%vyplýva aj že $s$ leží na ceste medzi $r,w$

Let $z$ denote the first common vertex of paths $Q: l=y_1,y_2,\dots
,y_s=k$ and $P:y=x_1,x_2,\dots ,x_m=x$.
Consider 
\[
i_{m}=min\{i|\exists j \in \{1,\dots ,m \},y_i=x_j\}
\]
and therefore $z=y_{i_m}$, let $T$ be the path from $z$ to $l$, we will {show that} $z$ is 
the only one common vertex of $T$ and $P$, vertices from $P$ split into the 4 subpaths, $P_1$ from $y$ to $z$,
$P_2$ from $z$ to $u$, edge $\{u,w\}$ and $P_3$ from $w$ to $x$. Vertices from $P_1$
are not in $Q$ (except {for} $z$) {from the} definition of $z$. 
Vertices from $P_2$ are not in $T$ (except {for} $z$) from {the} uniqueness of {paths} in trees and vertices from $P_3$ {belong} to 
$K(v,G)$ and every {vertex} of $T$ belongs to $V(G)\setminus K(v,G)$.   
By composition of paths $P_1,T,\{l,w\},P_3$, we get {a} path from $y$ to $x$ in {the} 
graph $\bar{G}$.

Let $\bar{P}$ denote the path from $\bar{y}$ to $\bar{x}$, analogically define
$\bar{z}$ {as the first} common vertex of paths $\bar{P}$ and $Q$ (first in {the} direction from $\bar{y} $ to
$\bar{x}$ ). We split $\bar{P}$ into the subpaths $\bar{P}_1$ from
$\bar{y}$ to $\bar{z}$ , $\bar{P}_2$ from $\bar{z}$ to $u$, edge $\{u,w\}$ and
$\bar{P}_3$ from $u$ to $\bar{x}$. Let $\bar{T}$ be the  path from $u$ to $l$, analogically we get {that} $u$ is the only one common vertex of 
$\bar{P}$ and $\bar{T}$. Hence 
$\bar{P}_1,\bar{P}_2,\bar{T},\{l,w\},\bar{P}_3$ is {a} path between
$\bar{y},\bar{x}$ in {the} graph $\bar{G}$.

And for paths from $u$ to $z$ and from $u$ to $\bar{z}$, $u$ is the only one common vertex, by uniqueness of path in trees. 

Now we can calculate.
\begin{equation}
\begin{aligned}
\rho_G(x,y)&=\rho_G(x,w)+1+\rho_G(u,z)+\rho_G(z,y),
\\
\rho_G(\bar{x},\bar{y})&=\rho_G(\bar{x},w)+1+\rho_G(u,\bar{z})+\rho_G(\bar{z},\bar{y}),
\\
\rho_{\bar{G}}(x,y)&=\rho_G(x,w)+1+\rho_G(l,z)+\rho_G(z,y),
\\
\rho_{\bar{G}}(\bar{x},\bar{y})&=\rho_G(\bar{x},w)+1+\rho_G(l,z)+\rho_G(z,u) +
\rho_G(u,\bar{z})+ \rho_G(\bar{z},\bar{y}),\\
\\
\text{hence}
\\
\\
\rho_{\bar{G}}(\bar{x},\bar{y})+\rho_{\bar{G}}(x,y) &=
\rho_G(\bar{x},\bar{y})+\rho_G(x,y)+ 2\rho_G(l,z).
\end{aligned}\notag
\end{equation}
Now we get our inequality and we see {that both} are equal {if} and only if
$l=z$. But $l$ is a leaf, {hence} $z$ is a leaf, then 
$y=z=l$. For $\tilde{G}$ analogically.

\end{proof}

\begin{example}

Paths between $x,y$ and 
$\bar{x},\bar{y}$ in graphs $G$ and $\bar{G}$. 
\begin{center}
\definecolor{uququq}{rgb}{0.25,0.25,0.25}
\definecolor{wwqqww}{rgb}{0.4,0,0.4}
\definecolor{qqqqff}{rgb}{0,0,1}
\definecolor{ffqqqq}{rgb}{1,0,0}
\definecolor{xdxdff}{rgb}{0.49,0.49,1}
\begin{tikzpicture}[line cap=round,line join=round,>=triangle 45,x=1.0cm,y=1.0cm]
\clip(-3.25,-4.66) rectangle (10.53,5.55);
\draw (0,3)-- (0,2);
\draw [line width=3.6pt,color=ffqqqq] (0,2)-- (0,1);
\draw [line width=3.6pt,color=ffqqqq] (0,1)-- (0,0);
\draw [line width=3.6pt,color=qqqqff] (0,0)-- (0,-1);
\draw (0,-1)-- (0,-2);
\draw [line width=3.6pt,color=wwqqww] (0,0)-- (-1,0);
\draw [line width=3.6pt,color=ffqqqq] (-1,0)-- (-2,0);
\draw (-2,0)-- (-3,0);
\draw [line width=3.6pt,color=ffqqqq] (-2,0)-- (-2.32,0.48);
\draw [line width=3.6pt,color=ffqqqq] (-2.32,0.48)-- (-2.72,0.76);
\draw [line width=3.6pt,color=qqqqff] (-1,0)-- (-1.42,0.72);
\draw (-1.42,0.72)-- (-1.88,1.16);
\draw (0,1)-- (0.84,1.28);
\draw (0.84,1.28)-- (1.56,1.2);
\draw [line width=3.6pt,color=qqqqff] (0,-1)-- (0.88,-0.86);
\draw (0.88,-0.86)-- (1.82,-0.56);
\draw (0,0)-- (0.66,0.4);
\draw (0.66,0.4)-- (1.58,0.44);
\draw (0.84,1.28)-- (1.14,1.78);
\draw [line width=3.6pt,color=ffqqqq] (0,2)-- (0.82,2.44);
\draw (1.58,0.44)-- (2.48,0.44);
\draw (-2,0)-- (-2.14,-0.8);
\draw [line width=3.6pt,color=ffqqqq] (5.31,3.22)-- (4.91,3.5);
\draw [line width=3.6pt,color=qqqqff] (6.63,2.74)-- (6.21,3.46);
\draw (6.21,3.46)-- (5.75,3.9);
\draw (8.36,1.24)-- (9.08,1.16);
\draw (8.4,-0.9)-- (9.34,-0.6);
\draw (8.13,0.34)-- (9.1,0.4);
\draw (8.36,1.24)-- (8.66,1.74);
\draw (9.1,0.4)-- (10,0.4);
\draw (7.52,-2.04)-- (7.55,-0.95);
\draw [line width=3.6pt,color=qqqqff] (7.55,-0.95)-- (7.49,-0.13);
\draw [line width=3.6pt,color=qqqqff] (7.49,-0.13)-- (7.47,0.78);
\draw [line width=3.6pt,color=qqqqff] (7.47,0.78)-- (7.49,1.79);
\draw [line width=3.6pt,color=wwqqww] (7.49,1.79)-- (7.49,2.72);
\draw (9.1,0.4)-- (8.13,0.34);
\draw (7.49,-0.13)-- (8.13,0.34);
\draw (8.36,1.24)-- (7.47,0.78);
\draw [line width=3.6pt,color=ffqqqq] (8.34,2.4)-- (7.49,1.79);
\draw [line width=3.6pt,color=qqqqff] (8.4,-0.9)-- (7.55,-0.95);
\draw [line width=3.6pt,color=ffqqqq] (6.63,2.74)-- (5.72,2.71);
\draw (5.72,2.71)-- (5.49,1.94);
\draw [line width=3.6pt,color=ffqqqq] (5.72,2.71)-- (5.31,3.22);
\draw (5.72,2.71)-- (4.8,2.71);
\draw [line width=3.6pt,color=wwqqww] (6.63,2.74)-- (7.49,2.72);
\draw (-2.66,1.16) node[anchor=north west] {$x$};
\draw (-1.27,1.18) node[anchor=north west] {$\bar{x}$};
\draw (1,2.88) node[anchor=north west] {$y$};
\draw (1.05,-0.73) node[anchor=north west] {$\bar{y}$};
\draw (-0.45,2.32) node[anchor=north west] {$z$};
\draw (-0.42,-0.78) node[anchor=north west] {$\bar{z}$};
\draw (5.01,3.98) node[anchor=north west] {$x$};
\draw (6.48,3.97) node[anchor=north west] {$\bar{x}$};
\draw (7,1.99) node[anchor=north west] {$z$};
\draw (8.52,-0.76) node[anchor=north west] {$\bar{y}$};
\draw (7.14,-0.68) node[anchor=north west] {$\bar{z}$};
\draw (8.51,2.91) node[anchor=north west] {$y$};
\draw (1.13,3.92) node[anchor=north west] {$G$};
\draw (8.72,3.79) node[anchor=north west] {$\bar{G}$};
\begin{scriptsize}
\fill [color=black] (0,3) circle (1.5pt);
\fill [color=xdxdff] (0,2) circle (1.5pt);
\fill [color=black] (0,1) circle (1.5pt);
\fill [color=black] (0,0) circle (1.5pt);
\fill [color=black] (0,-2) circle (1.5pt);
\fill [color=black] (-1,0) circle (1.5pt);
\fill [color=xdxdff] (-2,0) circle (1.5pt);
\fill [color=black] (-3,0) circle (1.5pt);
\fill [color=qqqqff] (-2.32,0.48) circle (1.5pt);
\fill [color=qqqqff] (-2.72,0.76) circle (1.5pt);
\fill [color=qqqqff] (-1.42,0.72) circle (1.5pt);
\fill [color=qqqqff] (-1.88,1.16) circle (1.5pt);
\fill [color=qqqqff] (0.84,1.28) circle (1.5pt);
\fill [color=qqqqff] (1.56,1.2) circle (1.5pt);
\fill [color=qqqqff] (0.88,-0.86) circle (1.5pt);
\fill [color=qqqqff] (1.82,-0.56) circle (1.5pt);
\fill [color=qqqqff] (0.66,0.4) circle (1.5pt);
\fill [color=qqqqff] (1.58,0.44) circle (1.5pt);
\fill [color=qqqqff] (1.14,1.78) circle (1.5pt);
\fill [color=qqqqff] (0.82,2.44) circle (1.5pt);
\fill [color=qqqqff] (2.48,0.44) circle (1.5pt);
\fill [color=qqqqff] (-2.14,-0.8) circle (1.5pt);
\fill [color=black] (0,-1) circle (1.5pt);
\fill [color=black] (7.52,-2.04) circle (1.5pt);
\fill [color=black] (6.63,2.74) circle (1.5pt);
\fill [color=qqqqff] (5.31,3.22) circle (1.5pt);
\fill [color=qqqqff] (4.91,3.5) circle (1.5pt);
\fill [color=qqqqff] (6.21,3.46) circle (1.5pt);
\fill [color=qqqqff] (5.75,3.9) circle (1.5pt);
\fill [color=qqqqff] (8.36,1.24) circle (1.5pt);
\fill [color=qqqqff] (9.08,1.16) circle (1.5pt);
\fill [color=qqqqff] (8.4,-0.9) circle (1.5pt);
\fill [color=qqqqff] (9.34,-0.6) circle (1.5pt);
\fill [color=qqqqff] (8.13,0.34) circle (1.5pt);
\fill [color=qqqqff] (9.1,0.4) circle (1.5pt);
\fill [color=qqqqff] (8.66,1.74) circle (1.5pt);
\fill [color=qqqqff] (8.34,2.4) circle (1.5pt);
\fill [color=qqqqff] (10,0.4) circle (1.5pt);
\fill [color=qqqqff] (5.49,1.94) circle (1.5pt);
\fill [color=qqqqff] (7.55,-0.95) circle (1.5pt);
\fill [color=qqqqff] (7.49,-0.13) circle (1.5pt);
\fill [color=qqqqff] (7.49,2.72) circle (1.5pt);
\fill [color=qqqqff] (5.72,2.71) circle (1.5pt);
\fill [color=qqqqff] (4.8,2.71) circle (1.5pt);
\fill [color=uququq] (7.47,0.78) circle (1.5pt);
\fill [color=uququq] (7.49,1.79) circle (1.5pt);
\end{scriptsize}
\end{tikzpicture}
\end{center}

\end{example}

\begin{lemma}\label{l13}

Let $(x,y) \in F^0$ then \[
\rho_{\bar{G}}(x,y)> \rho_G(x,y),
\]
\[
\rho_{\tilde{G}}(x,y)> \rho_G(x,y).
\]
\end{lemma}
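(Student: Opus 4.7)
The plan is to decompose both distances around the branch vertex $u$ and read off the difference. Because $x \in K(v,G)$ and $y \in V(G)\setminus K(v,G)$, Lemma \ref{l8}(iii) gives that the unique $G$-path from $x$ to $y$ uses the edge $\{w,u\}$, so splitting at this edge yields
\[
\rho_G(x,y) = \rho_G(x,w) + 1 + \rho_G(u,y).
\]
Let $u^{\ast}$ denote the neighbor of $u$ through which this $G$-path enters $u$ from the side of $y$, so $\rho_G(u,y)=1+\rho_G(u^{\ast},y)$. The hypothesis $(x,y)\in F^{0}$ is exactly that $u^{\ast}\notin\{u^{+},u^{-}\}$, and Lemma \ref{l8}(ii) applied to $u,y\in V(G)\setminus K(v,G)$ also gives $u^{\ast}\neq w$.

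Next I would evaluate $\rho_{\bar G}(x,y)$ by unique-path arguments in the tree $\bar G$ (Lemma \ref{l9}). Since $x,w\in K(v,G)$ and $l,y\in V(G)\setminus K(v,G)$, the unique $\bar G$-path from $x$ to $y$ must cross from one component to the other via the replacement edge $\{w,l\}$, and by Lemma \ref{l10} the internal distances within each component are unchanged, whence
\[
\rho_{\bar G}(x,y) = \rho_G(x,w) + 1 + \rho_G(l,y).
\]
The key auxiliary computation is to re-express $\rho_G(l,y)$ in terms of $\rho_G(u^{\ast},y)$. The unique $G$-path from $l$ to $u$ ends with the edge $\{u^{+},u\}$, while the unique $G$-path from $u$ to $y$ begins with $\{u,u^{\ast}\}$; since $u^{+}\neq u^{\ast}$, these two tree paths meet only at $u$, and their concatenation is the unique $G$-path from $l$ to $y$. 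Hence
\[
\rho_G(l,y) = \rho_G(l,u) + 1 + \rho_G(u^{\ast},y).
\]

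Substituting back, I expect the telescoping
\[
\rho_{\bar G}(x,y) - \rho_G(x,y) \;=\; \rho_G(l,u),
\]
which is at least $1$: the vertex $u$ has three distinct neighbors $u^{+},u^{-},w$ (the minimality of $i_{m}$ in the definition of $u$ forces $w\notin\{u^{+},u^{-}\}$), so $u$ is not a leaf and therefore $u\neq l$. The claim for $\tilde G$ follows by the same argument with $l,u^{+}$ replaced by $k,u^{-}$, using $u^{\ast}\neq u^{-}$ in place of $u^{\ast}\neq u^{+}$. The only delicate point is the bookkeeping ensuring that the various unique tree paths intersect only at $u$, which is precisely what the $F^{0}$ assumption together with the structural choice of $u,u^{+},u^{-}$ guarantees; once that is in place the two inequalities are immediate.
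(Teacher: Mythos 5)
Your proof is correct and follows essentially the same route as the paper: both decompose the $\bar G$-path from $x$ to $y$ as $x\to w$, the new edge $\{w,l\}$, then $l\to u\to y$, arrive at $\rho_{\bar G}(x,y)=\rho_G(x,y)+\rho_G(l,u)$, and conclude from $u\neq l$. The only (harmless) notational gap is the degenerate case $y=u$, where your $u^{\ast}$ is undefined but the identity $\rho_G(l,y)=\rho_G(l,u)+\rho_G(u,y)$ holds trivially.
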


\begin{proof}

Let $P$ {be} a path from $x$ to $y$ and $Q$ {be} a path from $l$ to $k$ in $G$,
for $P$ and $Q$, $u$ is the only one common vertex {because} $(x,y) \in F^0$. Hence 
$x\rightarrow w \-- l\rightarrow u \rightarrow y$ is a path in
$\bar{G}$, where paths of type $a\rightarrow b$ are subpaths of $P$ and $Q$ and $\-- $ {denotes an} edge.  
Now we can calculate {the following}.
\[ 
\rho_{\bar{G}}(x,y)=\rho_G(x,u)+1+\rho_G(l,u)+\rho_G(u,y)=\rho_G(x,y)+\rho_G(l,u)
\]
and from $l \neq u$ {we} have inequality.

For $\tilde{G}$ analogically.
\end{proof}

\begin{lemma}\label{l14}

\[\rho_{\bar{G}}(x,y) > \rho_G(x,y)\quad \text{for }(x,y) \in F^-,\]
\[\rho_{\tilde{G}}(x,y) > \rho_G(x,y)\quad \text{for }(x,y) \in F^+.\]

\end{lemma}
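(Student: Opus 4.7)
The plan is to prove the first inequality for $(x,y) \in F^-$; the second inequality, for $(x,y) \in F^+$, then follows by the symmetry that exchanges $l$ with $k$, $u^+$ with $u^-$, and $\bar{G}$ with $\tilde{G}$.

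Fix $(x,y) \in F^-$, so $x \in K(v,G)$, $y \in V(G) \setminus K(v,G)$, and the unique $G$-path from $x$ to $y$ passes through the edge $\{u, u^-\}$. By Lemma~\ref{l8}(iii) it also passes through $\{w, u\}$, and since $G$ is a tree this path decomposes as a piece from $x$ to $w$ inside $K(v,G)$, then the edges $\{w,u\}$ and $\{u,u^-\}$, then a piece from $u^-$ to $y$ inside $V(G) \setminus K(v,G)$. Hence
\[
\rho_G(x,y) = \rho_G(x,w) + 2 + \rho_G(u^-, y).
\]

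Next I would analyze the $\bar{G}$-distance. Removing $\{w,u\}$ from $G$ splits $V(G)$ into $K(v,G)$ and $V(G) \setminus K(v,G)$, so $\{w,l\}$ is the only edge of $\bar{G}$ joining these two sets. Because $x,w \in K(v,G)$ and $l,y \in V(G) \setminus K(v,G)$, any $\bar{G}$-path from $x$ to $y$ must use $\{w,l\}$, and by Lemma~\ref{l10} the remaining pieces have the same length in $\bar{G}$ as in $G$, giving
\[
\rho_{\bar{G}}(x,y) = \rho_G(x,w) + 1 + \rho_G(l,y).
\]

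The last step is to compute $\rho_G(l,y)$. Since the $G$-path from $x$ to $y$ uses $\{u,u^-\}$, the vertex $y$ lies in the component of $G - \{u,u^-\}$ containing $u^-$; on the other hand $l$ lies in the component containing $u$, because the subpath $x_1,\dots,x_{j_m}$ of $Q$ runs from $l$ to $u$ without touching $\{u,u^-\}$. Therefore the unique $G$-path from $l$ to $y$ is forced through $u$ and $u^-$, which yields $\rho_G(l,y) = \rho_G(l,u) + 1 + \rho_G(u^-, y)$. Subtracting the two formulas gives $\rho_{\bar{G}}(x,y) - \rho_G(x,y) = \rho_G(l,u) > 0$, since $l \neq u$ by the earlier remark. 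The main delicate point is this final claim that $l$ and $y$ sit on opposite sides of $\{u,u^-\}$; once it is pinned down, the inequality is pure bookkeeping.
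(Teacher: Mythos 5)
Your proof is correct and follows essentially the same route as the paper: both compute the exact tree distances and arrive at the identity $\rho_{\bar G}(x,y)=\rho_G(x,y)+\rho_G(l,u)$, concluding strict inequality from $l\neq u$ (the paper splits the path at the meeting vertex $z$ of the $y$--$x$ and $l$--$k$ paths, while you split at $u^-$ and force the $l$--$y$ geodesic through $\{u,u^-\}$, which is an equivalent bookkeeping). The symmetry reduction of the $F^+$ case to the $F^-$ case is exactly the paper's ``analogically.''
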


\begin{proof}
We will prove {the} first inequality.
As well as in lemma \ref{l12} denote $z$ the first common vertex of paths from $y$ to $x$ and from $k$ to $l$, 
formally we can define {it} as well as in lemma \ref{l12}. Now we consider a path \\ $x \rightarrow w \-- l \rightarrow u \rightarrow z
\rightarrow y$. Hence
\[
\rho_{\bar{G}}(x,y)=\rho_G(x,w)+1+\rho_G(l,u)+\rho_G(u,z)+\rho_G(z,y)=\rho_G(x,y)+\rho_G(l,u)
\]
and from $l \neq u$ {we} have inequality.
 
For second inequality analogically.
\end{proof}

\begin{definition}

Let $G$ and $H$ be graphs such that $G$ is connected, \\ $|V(G)|~=~|V(H)|$ and 
$f:V(H)\rightarrow V(G)$ {is a} pseudoordering, we define a set 

\[
L=\{(x,y)\in K(v,G)\times (V(G)\setminus K(v,G))|x\sim_{H,f} y\}.
\]
\end{definition}

\begin{lemma}\label{l15}

Let $G$ and $H$ be graphs such that $G$ is connected, \\ $|V(G)|~=~|V(H)|$ and 
$f:V(H)\rightarrow V(G)$ {is a} pseudoordering. {Then}
\[
s_H(f,\bar{G}) \geq s_H(f,G),
\]
or
\[
s_H(f,\tilde{G}) \geq s_H(f,G),
\]
the first case occurs when

 $|L \cap F^+| \leq |L \cap F^-|$, the second case occurs when
 $|L \cap F^+|
\geq |L \cap F^-|$.
\end{lemma}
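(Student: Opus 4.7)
The plan is to decompose the sum $s_H(f,G)$ according to how each edge of $H$ is placed by $f$ relative to the partition $V(G) = K(v,G) \sqcup (V(G)\setminus K(v,G))$, and then show that the three previous lemmas control the change in distance as we pass from $G$ to $\bar G$ (or $\tilde G$).

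Concretely, for every edge $\{x',y'\}\in E(H)$ either (a)~both $f(x'),f(y')$ lie in $K(v,G)$, (b)~both lie in $V(G)\setminus K(v,G)$, or (c)~one lies in each part. In cases (a) and (b), Lemma~\ref{l10} gives $\rho_G(f(x'),f(y')) = \rho_{\bar G}(f(x'),f(y')) = \rho_{\tilde G}(f(x'),f(y'))$, so these edges contribute the same amount to $s_H(f,G)$, $s_H(f,\bar G)$, and $s_H(f,\tilde G)$. The interesting contributions are precisely those edges whose $f$-image is a pair in $L$; by Lemma~\ref{l11}, $L$ partitions as $(L\cap F^+)\sqcup(L\cap F^-)\sqcup(L\cap F^0)$.

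Assume first that $|L\cap F^+|\le|L\cap F^-|$; I will show $s_H(f,\bar G)\ge s_H(f,G)$. Choose an injection $\iota\colon L\cap F^+\hookrightarrow L\cap F^-$. For every pair $(x,y)\in L\cap F^+$ paired with $(\bar x,\bar y)=\iota(x,y)\in L\cap F^-$, Lemma~\ref{l12} yields
\[
\rho_{\bar G}(x,y)+\rho_{\bar G}(\bar x,\bar y)\ge \rho_G(x,y)+\rho_G(\bar x,\bar y).
\]
For every $(\bar x,\bar y)\in (L\cap F^-)\setminus \iota(L\cap F^+)$, Lemma~\ref{l14} gives $\rho_{\bar G}(\bar x,\bar y)>\rho_G(\bar x,\bar y)$. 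For every $(x,y)\in L\cap F^0$, Lemma~\ref{l13} gives $\rho_{\bar G}(x,y)>\rho_G(x,y)$. Summing these inequalities over all of $L$, together with the equalities contributed by edges of type (a) and (b), produces
\[
s_H(f,\bar G)-s_H(f,G)=\sum_{(a,b)\in L}\bigl(\rho_{\bar G}(a,b)-\rho_G(a,b)\bigr)\ge 0.
\]
The case $|L\cap F^+|\ge|L\cap F^-|$ is handled symmetrically by injecting $L\cap F^-$ into $L\cap F^+$ and using the second half of each of Lemmas~\ref{l12}, \ref{l13}, \ref{l14} to obtain $s_H(f,\tilde G)\ge s_H(f,G)$.

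The main obstacle is the bookkeeping: one must make sure that each edge of $H$ that contributes to the ``cross'' part of the sum is accounted for exactly once, and that the pairing in Lemma~\ref{l12} is applied to disjoint pairs (so that no element of $L\cap F^-$ is used both in a pair and as an ``unpaired leftover''). Once the injection $\iota$ is fixed, this is automatic, but without it the double-counting would be a genuine risk. A minor point to verify is that the ordered pair $(f(x'),f(y'))$ arising from a cross edge $\{x',y'\}\in E(H)$ corresponds uniquely to an element of $L$ (the ordering being forced by the requirement that the first coordinate lies in $K(v,G)$), which is immediate from the definition of $L$ and the symmetry of $\sim_{H,f}$.
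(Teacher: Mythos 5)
Your proposal is correct and follows essentially the same route as the paper: decompose the sum over $E(H)$ into same-side pairs (Lemma \ref{l10}), pair off elements of $L\cap F^+$ with elements of $L\cap F^-$ to apply Lemma \ref{l12}, and handle the leftover cross pairs with Lemmas \ref{l13} and \ref{l14}. The explicit injection $\iota$ is just a cleaner formalization of the indexing the paper uses in its rearranged sum.
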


\begin{proof}

Denote $n^+=|L \cap F^+|,\quad n^-=|L \cap F^-|$,
let $n^+ \geq n^-$, the second case is analogical, we rearrange {the} sum $s_H(f,G)$
in this way.
\[
s_H(f,G)=\sum_{i=1}^{n^-} (\rho_G(x_i,y_i)+\rho_G(\bar{x}_i,\bar{y}_i)) +
\sum_{i=n^-+1}^{n^+}\rho_G(x_i,y_i)+
\sum_{i=1}^{m}\rho_G(a_i,b_i)+\sum_{i=1}^{\bar{m}}\rho_G(c_i,d_i), 
\]
where
\[(x_i,y_i) \in F^+, \quad (\bar{x}_i,\bar{y}_i) \in F^-, \quad (a_i,b_i)
\in F^0, \quad (c_i,d_i) \notin L.
\]
Now, by lemma \ref{l12}
\[
\rho_G(x_i,y_i)+\rho_G(\bar{x}_i,\bar{y}_i)\leq
\rho_{\tilde{G}}(x_i,y_i)+\rho_{\tilde{G}}(\bar{x}_i,\bar{y}_i),
\]
by lemma \ref{l14}
\[
\rho_G(x_i,y_i)\leq\rho_{\tilde{G}}(x_i,y_i),
\]
by lemma \ref{l13}
\[
\rho_G(a_i,b_i)\leq\rho_{\tilde{G}}(a_i,b_i)
\]
and by lemma \ref{l10}
\[
\rho_G(c_i,d_i)=\rho_{\tilde{G}}(c_i,d_i).
\]
Hence

\[
\begin{aligned}
&s_H(f,G)\leq \\
&\leq\sum_{i=1}^{n^-} (\rho_{\tilde{G}}(x_i,y_i)+\rho_{\tilde{G}}(\bar{x}_i,\bar{y}_i)) +
\sum_{i=n^-+1}^{n^+}\rho_{\tilde{G}}(x_i,y_i)+\sum_{i=1}^{m}\rho_{\tilde{G}}(a_i,b_i)+\sum_{i=1}^{\bar{m}}\rho_{\tilde{G}}(c_i,d_i)=
\\
&=s_H(f,\tilde{G}).
\end{aligned}
\]

\end{proof}

\begin{lemma}\label{l16}
Let $G$ and $H$ be graphs such that $G$ is connected, $|V(G)|~=~|V(H)|$ and 
$f:V(H)\rightarrow V(G)$ {is a} pseudoordering. {Then there} exists {a pseudoordering}
\\
$g:V(H) \rightarrow \{x_1,x_2,\dots
,x_{|V(G)|}\}~=~V(P_{|V(G)|-1})$ such that 
\[
s_H(f,G) \leq s_H(g,P_{|V(G)|-1}).
\]
\end{lemma}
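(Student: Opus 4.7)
The plan is to reduce the claim to the case where $G$ is a tree, and then to argue by induction on the number of leaves of that tree.

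First I would observe that we may assume $G$ is a tree. Indeed, let $T$ be any spanning tree of $G$. Since every path in $T$ is still a path in $G$, distances can only grow upon restriction: $\rho_G(x,y) \leq \rho_T(x,y)$ for all $x,y \in V(G) = V(T)$. Therefore $s_H(f,G) \leq s_H(f,T)$, and it suffices to prove the lemma with $G$ replaced by $T$.

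With $G$ now a tree on $|V(G)| \geq 2$ vertices, I would induct on the number of leaves of $G$. If $G$ has exactly two leaves then $G$ is itself a path on $|V(G)|$ vertices; fixing any graph isomorphism $\phi \colon G \to P_{|V(G)|-1}$, the bijection $g := \phi \circ f$ is a pseudoordering with $s_H(g, P_{|V(G)|-1}) = s_H(f, G)$, which handles the base case.

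For the inductive step, assume $G$ has at least three leaves, so $G$ is not a path. Then the data $l,k,v,u,w,u^+,u^-$ from the preceding definitions is available and, by Lemma \ref{l9}, yields trees $\bar{G}$ and $\tilde{G}$ on $V(G)$. By Lemma \ref{l15}, at least one of the two choices $G' := \bar{G}$ or $G' := \tilde{G}$ satisfies $s_H(f, G') \geq s_H(f, G)$. The combinatorial point to verify is that $G'$ has strictly fewer leaves than $G$: passing from $G$ to $\bar{G}$, the former leaf $l$ acquires the edge $\{w,l\}$ and now has degree $2$; the vertex $u$ loses the edge $\{w,u\}$ but still has degree at least $2$ because $u$ was a branch vertex of degree at least $3$; every other vertex retains its original degree. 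Thus the leaf count drops by exactly one, and the same calculation works for $\tilde{G}$ with $k$ in place of $l$. Applying the induction hypothesis to the pseudoordering $f$ on $G'$ produces a pseudoordering $g$ on $P_{|V(G)|-1}$ with
\[
s_H(g, P_{|V(G)|-1}) \geq s_H(f, G') \geq s_H(f, G),
\]
as required.

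The main obstacle, granted the earlier lemmas, is the leaf-count bookkeeping in the inductive step — in particular, checking that the former branch vertex $u$ does not become a new leaf in $G'$, which is exactly where the fact that $u$ has degree at least $3$ in the tree $G$ is used. Beyond that, the argument is a clean assembly of Lemmas \ref{l9} and \ref{l15}.
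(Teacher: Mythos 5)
Your proof is correct and takes essentially the same approach as the paper: repeatedly apply the $\bar{G}/\tilde{G}$ operation of Lemma \ref{l15} until a path is reached. The only cosmetic differences are that you certify termination by the number of leaves (which indeed drops by exactly one per step, precisely because $\deg_G u \geq 3$), whereas the paper uses the potential $\alpha(G)=\sum_{\deg_G v\geq 3}\deg_G v$, and that you fold the spanning-tree reduction into the lemma itself, which the paper defers to Theorem \ref{v1}.
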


\begin{proof}
We denote \[
\alpha(G)=\sum_{\substack{v \in V(G) \\ deg_Gv \geq 3}} deg_Gv ,
\]
from {the} definition of $u,l$ and $k$ we know {that} $deg_G u \geq 3$ and $deg_G l=deg_G k=1$.
From {the} construction of $\bar{G}$ and $\tilde{G}$
we have $deg_{\bar{G}} u=deg_{\tilde{G}} u \leq deg_G u$,  
$deg_{\bar{G}} l=deg_{\tilde{G}} k=2$ and all other vertices have the same
degree as before. {Hence} \[
\alpha(\bar{G})<\alpha(G),
\]
\[
\alpha(\tilde{G})<\alpha(G).
\]
Let $S$ be a tree, which is not a path, we choose any three {pairwise distinct leaves} in
$V(S)$ and define $S^*$ {as} one of graphs
$\bar{S},\tilde{S}$, which satisfy
$s_H(f,S^*)~\geq~s_H(f,S)$.
Denote $G_0=G$ and for $i\geq 0$ denote $G_{i+1}=G_i^*$ {if} $G_i$ is not a path,
otherwise define $G_{i+1}=G_i$.
For contradiction we assume {that the} tree $G_i$ is not a path for every $i \in \mathbb{N}_0$.
We know $\alpha(G_i) \in  \mathbb{N}_0$ for every $i$ and 
\[
\alpha(G_{i+1})\leq \alpha(G_i)-1,
\]
hence
\[
\alpha(G_{\alpha(G_0)+1}) \leq \alpha(G_O)-\alpha(G_0)-1=-1
\]
and this is contradiction. Therefore {there} exists some $j$ such that $G_j$ is a path, from lemma \ref{l15} we get  
\[
s_H(f,G_{i+1})\geq s_H(f,G_i)
\]
and {hence}
\[
s_H(f,G_j) \geq s_H(f,G).
\]

\end{proof}

\begin{theorem}\label{v1}
Let $G$ and $H$ be graphs such that $G$ is connected, \\ $|V(G)|~=~|V(H)|$ and 
$f:V(H)\rightarrow V(G)$ {is a} pseudoordering, then {there} exists {a pseudordering}
\\
$g:V(H) \rightarrow \{x_1,x_2,\dots
,x_{|V(G)|}\}~=~V(P_{|V(G)|-1})$ such that  
\[
s_H(f,G) \leq s_H(g,P_{|V(G)|-1}).
\]

\end{theorem}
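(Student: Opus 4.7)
The plan is to reduce the general connected case to the tree case already treated in Lemma \ref{l16}. Since $G$ is a connected finite graph, it admits a spanning tree $T$ with $V(T)=V(G)$ and $E(T)\subseteq E(G)$. The same bijection $f:V(H)\to V(G)=V(T)$ is then a pseudoordering on the tree $T$ by $H$, so Lemma \ref{l16} applies to it.

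The key observation is that removing edges can only increase distances: for any $a,b\in V(G)$, every path between $a$ and $b$ in $T$ is also a path in $G$, hence
\[
\rho_G(a,b)\leq \rho_T(a,b).
\]
Summing this inequality over the edges of $H$ (weighted by $f$) gives
\[
s_H(f,G)=\sum_{\{x,y\}\in E(H)}\rho_G(f(x),f(y))\leq \sum_{\{x,y\}\in E(H)}\rho_T(f(x),f(y))=s_H(f,T).
\]

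Applying Lemma \ref{l16} to the connected graph $T$ (which is a tree, so in particular all hypotheses of the lemma are satisfied) yields a pseudoordering $g:V(H)\to V(P_{|V(G)|-1})$ with
\[
s_H(f,T)\leq s_H(g,P_{|V(G)|-1}).
\]
Chaining these two inequalities produces the required bound $s_H(f,G)\leq s_H(g,P_{|V(G)|-1})$.

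There is essentially no obstacle beyond invoking the existence of a spanning tree of a connected finite graph and the monotonicity of distance under edge removal; the substantive work was done in Lemma \ref{l16}, whose conclusion is exactly what is needed after the spanning tree is chosen.
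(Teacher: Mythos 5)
Your proposal is correct and matches the paper's own proof essentially verbatim: the paper also passes to an arbitrary spanning tree $K$ of $G$, notes that $\rho_G(x,y)\leq\rho_K(x,y)$ since every path in $K$ is a path in $G$, concludes $s_H(f,G)\leq s_H(f,K)$, and then invokes Lemma \ref{l16} on the tree. Nothing further is needed.
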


\begin{proof}
Let $K$ {be} any {spanning} tree of $G$, $x,y\in V(G)$, we connect $x$ and $y$ with a path 
in graph $K$, this path is also {a} path in $G$. Hence 
\[
\rho_G(x,y) \leq \rho_K(x,y)
\] 
{for} every $x,y$, hence 
\[
s_H(f,G)\leq s_H(f,K),
\]
by lemma \ref{l16} {there} exists {a pseudoordering} \\
$g:V(H) \rightarrow \{x_1,x_2,\dots
,x_{|V(G)|}\}~=~V(P_{|V(G)|-1})$ such that
\[
s_H(f,G) \leq s_H(f,K) \leq s_H(g,P_{|V(G)|-1}).
\]

\end{proof}

\begin{corollary}\label{do1}
Let $G$ and $H$ be graphs such that $G$ is connected, \\ $|V(G)|~=~|V(H)|$, {then}
\[
h^+_{H}(G)\leq h^+_{H}(P_{|V(G)|-1}).
\]

\end{corollary}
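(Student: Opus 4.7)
The plan is to derive the corollary directly from Theorem \ref{v1} by taking maxima on both sides. Since $V(H)$ and $V(G)$ are finite, there are only finitely many bijections $V(H) \to V(G)$, so the supremum defining $h^+_H(G)$ is attained by some pseudoordering $f^\ast: V(H) \to V(G)$ with $h^+_H(G) = s_H(f^\ast, G)$.

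Next, I apply Theorem \ref{v1} to this extremal $f^\ast$. This yields a pseudoordering $g: V(H) \to V(P_{|V(G)|-1})$ such that
\[
s_H(f^\ast, G) \leq s_H(g, P_{|V(G)|-1}).
\]
Since $g$ is one particular element of the set over which $h^+_H(P_{|V(G)|-1})$ is maximized, we have $s_H(g, P_{|V(G)|-1}) \leq h^+_H(P_{|V(G)|-1})$. Chaining the two inequalities gives
\[
h^+_H(G) = s_H(f^\ast, G) \leq s_H(g, P_{|V(G)|-1}) \leq h^+_H(P_{|V(G)|-1}),
\]
which is the desired conclusion.

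There is no real obstacle here: the work has already been done in Theorem \ref{v1}, and the corollary is a one-line consequence of passing from a universally quantified pointwise comparison of sums to a comparison of their maxima. The only thing worth being careful about is that $|V(H)| = |V(G)| = |V(P_{|V(G)|-1})|$, so the set of pseudoorderings $V(H) \to V(P_{|V(G)|-1})$ is nonempty and $h^+_H(P_{|V(G)|-1})$ is well-defined.
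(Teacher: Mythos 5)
Your argument is correct and is exactly the intended deduction: the paper states the corollary without proof as an immediate consequence of Theorem \ref{v1}, and your passage from the pointwise inequality to the maxima (via an extremal $f^\ast$, which exists by finiteness) is the standard way to make that step explicit.
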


\section{Graphs with a maximal upper H-Hamiltonian number}

In this section we will prove {that} if in corollary \ref{do1} {the} graph $H$ is connected,
{then} in {the} inequality in corollary \ref{do1} {both} sides are equal.

\begin{remark}\label{pozn1}
For easier writing, we will {denote} vertices of $H$ {the} same as vertices of
$G$, we will rename them in this way $v\in H \mapsto f(v)$. We can naturally see it as graph with two sets of edges.

In inequalities in lemma \ref{l15} {both} sides are equal under specific conditions,
if $L\cap F^0\neq \emptyset$, {then} in lemma \ref{l13} {there is a} strict inequality and {then} also {the same happens} in theorem
\ref{v1}.

If $(L\setminus K(v,G)\times \{l\})\cap F^+\neq \emptyset$, {then} in lemma
\ref{l12} {there is a} strict inequality and {then} also {the same happens} in theorem \ref{v1}.
{Analogically} if $(L\setminus K(v,G)\times \{k\})\cap F^-\neq \emptyset$.

Overall we get {that} the only nontrivial case is 
\begin{equation}\label{r3}
L\subseteq K(v,G)\times \{k,l\}.
\end{equation}

\end{remark}

\begin{remark}
At the beginning we took three arbitrary leafs $k,l,v$ and we got remark
\ref{pozn1}, now we take another $\bar{k},\bar{l},\bar{v}$ for them we {also have} remark \ref{pozn1}.
\end{remark}

\begin{lemma}\label{l18}
Let $G$ be a tree, $H$ connected graph such that  
$|V(G)|=|V(H)|$ and $f:V(H)\rightarrow V(G)$ {is a} pseudoordering, which satisfy
\[
s_H(f,G)= h^+_{H}(P_{|V(G)|-1}),
\]

{then} $G$ is path.

\end{lemma}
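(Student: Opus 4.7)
The plan is to argue by contradiction: assume $G$ is not a path, so $G$ has a vertex $u^{*}$ with $\deg_G u^{*}\geq 3$. The hypothesis $s_H(f,G)=h_H^{+}(P_{|V(G)|-1})$ allows me to upgrade remark~\ref{pozn1} to an unrestricted statement about leaf triples. Namely, for any three pairwise distinct leaves $l,k,v\in V(G)$, the corresponding $\bar G$ and $\tilde G$ are trees by lemma~\ref{l9}, and by lemma~\ref{l16} each is bounded above by the path maximum; combined with lemma~\ref{l15} this yields
\[
s_H(f,G)\leq s_H(f,G^{*})\leq h_H^{+}(P_{|V(G)|-1})=s_H(f,G)
\]
for whichever $G^{*}\in\{\bar G,\tilde G\}$ is supplied by lemma~\ref{l15}. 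Hence that inequality holds with equality, and remark~\ref{pozn1} forces $L\subseteq K(v,G)\times\{k,l\}$ for the chosen triple.

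Next I would exploit this freedom by placing the three leaves around $u^{*}$. Every component of $G-u^{*}$ is a subtree that contains a leaf of $G$: a one-vertex component is itself a leaf of $G$, and any component on at least two vertices has two tree-leaves, at least one of which is not the neighbour of $u^{*}$ and so has degree $1$ in $G$. Since $\deg_G u^{*}\geq 3$, I can choose three leaves $l,k,v$ of $G$ lying in three pairwise distinct components of $G-u^{*}$. With any such choice the branch vertex in the construction is exactly $u=u^{*}$, and $K(v,G)$ is the component of $G-u^{*}$ containing $v$.

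Finally, I would conclude by showing that $u^{*}$ is isolated in $H$. Given any $z\in V(G)\setminus\{u^{*}\}$, let $A_{z}$ denote the component of $G-u^{*}$ containing $z$. Using $\deg_G u^{*}\geq 3$, I pick a leaf in $A_{z}$ to play the role of $v$ and leaves in two other components for $l$ and $k$; then $K(v,G)=A_{z}$, and the inclusion $L\subseteq A_{z}\times\{k,l\}$ prevents $\{z,u^{*}\}$ from being an edge of $H$, because $u^{*}\notin\{k,l\}$ (the latter are leaves while $u^{*}$ has degree at least $3$). Thus $u^{*}$ has no $H$-neighbour, contradicting connectedness of $H$ on $|V(H)|=|V(G)|\geq 2$ vertices. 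The step I expect to be the main obstacle is the very first one: the equality case in remark~\ref{pozn1} is phrased for the specific triple used in one reduction step, and the key observation is that lemma~\ref{l16} supplies the path upper bound independently of which triple we feed into lemma~\ref{l15}, so the equality condition is in fact available for every leaf triple.
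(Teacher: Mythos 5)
Your proposal is correct and follows essentially the same route as the paper: the equality hypothesis forces $L\subseteq K(v,G)\times\{k,l\}$ for \emph{every} choice of leaf triple, and placing the three leaves in distinct components of $G-u^{*}$ makes the branch vertex equal to $u^{*}$, which cannot lie in $\{k,l\}$. The only (cosmetic) difference is that the paper starts from one $H$-neighbour $x$ of $u$ supplied by connectedness and derives a contradiction with a single well-chosen triple, whereas you vary the triple over all components to show $u^{*}$ is isolated in $H$; you also spell out details (each component of $G-u^{*}$ contains a leaf of $G$, and the identification $u=u^{*}$, $K(v,G)=A_z$) that the paper leaves implicit.
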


\begin{proof}

For contradiction suppose {that} $G$ is not a path, then there {exist} three {pairwise distinct} {leaves} $k,l,v${, we} denote {in the same way} as before, vertex $u$ and set 
of vertices $K(v,G)$. Because graph $H$ is connected {there} exists {a} vertex $x$ such that $\{u,x\}\in E(H)$.
Let $X\subseteq V(G)$ be {a} set of vertices of {components} of graph $G\setminus u$, graph $G$ if we delete vertex $u$, containing $x$.
$G\setminus u$ {has}, by definition of $u${, at} least $3$ components. Let now $\bar{v}$ be an arbitrary leaf (leaf {in $G$}) in $X$. 
Choose $\bar{k},\bar{l}$ as arbitrary {leaves} in {pairwise distinct components} of $G\setminus u$ and different from $X$.

Now $(x,u)\in \bar{L}$, where $\bar{L}$ is alternative of $L$ for $\bar{k},\bar{l},\bar{v}$ and by remark 
\ref{pozn1} for $\bar{k},\bar{l},\bar{v}$ and by $k\neq u\neq l$ we get contradiction.

\end{proof}

\begin{example}
We show {the} idea of {the} last proof {in the following} picture.

\begin{center}

\definecolor{ttfftt}{rgb}{0.2,1,0.2}
\definecolor{ffqqtt}{rgb}{1,0,0.2}
\definecolor{qqqqff}{rgb}{0,0,1}
\begin{tikzpicture}[line cap=round,line join=round,>=triangle 45,x=1.0cm,y=1.0cm]
\clip(-2.75,-6.13) rectangle (13.18,6.65);
\draw (-0.71,2.75)-- (0.28,1.6);
\draw (0.28,1.6)-- (0.26,3.14);
\draw (0.26,3.14)-- (-0.36,4.12);
\draw (0.26,3.14)-- (1.1,3.96);
\draw (0.28,1.6)-- (1.74,1.15);
\draw (1.74,1.15)-- (2.63,2.12);
\draw (2.63,2.12)-- (2.07,2.93);
\draw (2.63,2.12)-- (3.48,1.38);
\draw (0.28,1.6)-- (0.82,0.2);
\draw (0.82,0.2)-- (1.92,0.38);
\draw (0.28,1.6)-- (-0.22,0.61);
\draw (-0.22,0.61)-- (-1.01,1.35);
\draw (-1.8,0.64)-- (-1.01,1.35);
\draw (-1.01,1.35)-- (-0.61,-0.59);
\draw (-0.71,2.75)-- (-1.65,2.37);
\draw [line width=4pt,dash pattern=on 7pt off 7pt,color=ffqqtt] (0.28,1.6)-- (2.63,2.12);
\draw (2.95,2.45) node[anchor=north west] {x};
\draw (6.1,2.7)-- (7.09,1.55);
\draw (7.09,1.55)-- (7.07,3.09);
\draw (7.07,3.09)-- (6.45,4.07);
\draw (7.07,3.09)-- (7.91,3.91);
\draw (7.09,1.55)-- (8.55,1.1);
\draw (8.55,1.1)-- (9.45,2.07);
\draw (9.45,2.07)-- (8.89,2.88);
\draw (9.45,2.07)-- (10.29,1.33);
\draw (7.09,1.55)-- (7.64,0.15);
\draw (7.64,0.15)-- (8.73,0.33);
\draw (7.09,1.55)-- (6.59,0.56);
\draw (6.59,0.56)-- (5.8,1.3);
\draw (5.01,0.59)-- (5.8,1.3);
\draw (5.8,1.3)-- (6.21,-0.64);
\draw (6.1,2.7)-- (5.16,2.32);
\draw [line width=4pt,dash pattern=on 7pt off 7pt,color=ffqqtt] (7.09,1.55)-- (9.45,2.07);
\draw (9.75,2.4) node[anchor=north west] {x};
\draw (7.10,2.16) node[anchor=north west] {u};
\draw (10.32,2.02) node[anchor=north west] {$ \bar{v} $};
\draw (4.65,2.91) node[anchor=north west] {$ \bar{l} $};
\draw (8.21,4.66) node[anchor=north west] {$ \bar{k} $};
\draw [rotate around={16.26:(9.27,1.96)},line width=2pt,color=ttfftt] (9.27,1.96) ellipse (1.8cm and 1.27cm);
\draw (10.02,4.07) node[anchor=north west] {$ K(\bar{v},G) $};
\begin{scriptsize}
\fill [color=qqqqff] (-0.71,2.75) circle (1.5pt);
\fill [color=qqqqff] (0.28,1.6) circle (1.5pt);
\draw[color=qqqqff] (0.40,1.94) node {$u$};
\fill [color=qqqqff] (0.26,3.14) circle (1.5pt);
\fill [color=qqqqff] (-0.36,4.12) circle (1.5pt);
\draw[color=qqqqff] (-0.24,4.47) node {$l$};
\fill [color=qqqqff] (1.1,3.96) circle (1.5pt);
\fill [color=qqqqff] (1.74,1.15) circle (1.5pt);
\fill [color=qqqqff] (2.63,2.12) circle (1.5pt);
\fill [color=qqqqff] (2.07,2.93) circle (1.5pt);
\fill [color=qqqqff] (3.48,1.38) circle (1.5pt);
\fill [color=qqqqff] (0.82,0.2) circle (1.5pt);
\fill [color=qqqqff] (1.92,0.38) circle (1.5pt);
\draw[color=qqqqff] (2.11,0.73) node {$k$};
\fill [color=qqqqff] (-0.22,0.61) circle (1.5pt);
\fill [color=qqqqff] (-1.01,1.35) circle (1.5pt);
\fill [color=qqqqff] (-1.8,0.64) circle (1.5pt);
\fill [color=qqqqff] (-0.61,-0.59) circle (1.5pt);
\draw[color=qqqqff] (-0.42,-0.24) node {$v$};
\fill [color=qqqqff] (-1.65,2.37) circle (1.5pt);
\fill [color=qqqqff] (6.1,2.7) circle (1.5pt);
\fill [color=qqqqff] (7.09,1.55) circle (1.5pt);
\fill [color=qqqqff] (7.07,3.09) circle (1.5pt);
\fill [color=qqqqff] (6.45,4.07) circle (1.5pt);
\fill [color=qqqqff] (7.91,3.91) circle (1.5pt);
\fill [color=qqqqff] (8.55,1.1) circle (1.5pt);
\fill [color=qqqqff] (9.45,2.07) circle (1.5pt);
\fill [color=qqqqff] (8.89,2.88) circle (1.5pt);
\fill [color=qqqqff] (10.29,1.33) circle (1.5pt);
\fill [color=qqqqff] (7.64,0.15) circle (1.5pt);
\fill [color=qqqqff] (8.73,0.33) circle (1.5pt);
\fill [color=qqqqff] (6.59,0.56) circle (1.5pt);
\fill [color=qqqqff] (5.8,1.3) circle (1.5pt);
\fill [color=qqqqff] (5.01,0.59) circle (1.5pt);
\fill [color=qqqqff] (6.21,-0.64) circle (1.5pt);
\fill [color=qqqqff] (5.16,2.32) circle (1.5pt);
\end{scriptsize}
\end{tikzpicture}
\end{center}

\end{example}

\begin{remark}
Let $G$ be a graph with {a} maximal $H$-Hamiltonian number, {then} every spanning tree of $G$ 
{has a} maximal $H$-Hamiltonian number, therefore every spanning tree is a path.
We will show {that the} only graphs with this property are cycles and paths.
\end{remark}

\begin{lemma}\label{l19}
Let $G$ be {a} connected graph such that $|V(G)|\geq 2$, {then} there is a vertex, which is not an articulation point.

\end{lemma}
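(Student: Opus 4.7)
The plan is to extract a non-articulation vertex from a spanning tree of $G$. Since $G$ is a finite connected graph, it admits a spanning tree $T$. Because $|V(T)|=|V(G)|\geq 2$ and $T$ is connected, the identity $\sum_{x\in V(T)}\deg_T x = 2|E(T)| = 2(|V(T)|-1)$ forces $T$ to contain at least two vertices of degree one; I would pick any such leaf $v$ of $T$. (Already one leaf is enough for the statement, but the argument automatically yields two, which may be useful later.)

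Next, I would verify that such a $v$ is not an articulation point of $G$. Since $v$ has exactly one neighbour in $T$, removing $v$ from $T$ leaves a connected graph $T\setminus v$ which spans $V(G)\setminus\{v\}$. As $T\setminus v$ is a subgraph of $G\setminus v$ on the same vertex set, $G\setminus v$ is connected as well, so $v$ is not an articulation point of $G$.

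I do not expect a real obstacle here: the statement is classical, and the proof relies only on two elementary facts, namely that a finite connected graph has a spanning tree and that a tree on at least two vertices has a leaf. The only thing to watch is that the leaf is taken in $T$, not in $G$ (a leaf of $G$ would be trivially non-separating, but such a vertex need not exist in $G$), and that the deletion argument is carried out inside $T$ and then transferred to $G$ via the inclusion $T\setminus v \subseteq G\setminus v$.
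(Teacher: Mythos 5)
Your proof is correct, but it takes a different route from the paper. You pass to a spanning tree $T$ of $G$, use the degree-sum identity to produce a leaf $v$ of $T$, and observe that $T\setminus v$ is a connected spanning subgraph of $G\setminus v$, so $v$ is not an articulation point; as you note, this even gives two such vertices. The paper instead argues via the block--cut tree: it picks a block $B$ that is a leaf of that tree (or $B=G$ if the block--cut tree is a single vertex), notes that a leaf block contains at most one articulation point of $G$ while having at least two vertices, and concludes that $B$ contains a non-articulation vertex. Your argument is more elementary, relying only on the existence of spanning trees and of leaves in finite trees, and it meshes naturally with the surrounding material, since the paper is already manipulating spanning trees in Lemmas \ref{l18} and \ref{l20}. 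The block--cut tree argument buys a little more structural information (it locates the non-articulation vertex inside a specific leaf block, and in principle yields one such vertex per leaf block), but for the purposes of Lemma \ref{l20}, where only the existence of one non-articulation vertex is used, either proof suffices.
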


\begin{proof}
Consider a block-cut tree of $G$ and {a} block $B$, which is {a} leaf of {the} block-cut tree or
if this tree {has} only one vertex, then $B=G$. $B$ is{, by definition 
of a block,} 2-connected. {Because $B$ is leaf we get that in $B$ there is only one} articulation and
in $B$ {there} are at least 2 vertices. Hence in $B$ {there} is at least one vertex,
which is not an articulation point.
\end{proof}

\begin{lemma}\label{l20}
Let $G$ {be a} finite connected graph such that $|V(G)|\geq 2$ and every spanning tree of $G$ is a path, {then} 
$G$ is {a} path or {a} cycle.

\end{lemma}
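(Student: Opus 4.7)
The plan is to prove the contrapositive: assuming $G$ is connected with $|V(G)|\geq 2$ and $G$ is neither a path nor a cycle, I would exhibit a spanning tree of $G$ that is not a path. The natural route is to exploit the fact that such a $G$ must contain a vertex of degree at least $3$.

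First I would invoke the elementary classification of connected graphs with maximum degree at most $2$ on at least two vertices: every such graph is either a path, when some vertex has degree $1$, or a cycle, when every vertex has degree $2$ (traverse the graph starting at a degree-$1$ vertex, or from any vertex in the all-degree-$2$ case, using connectedness to see that the traversal visits every vertex). Consequently, the assumption that $G$ is neither a path nor a cycle forces the existence of a vertex $v\in V(G)$ with $\deg_G(v)\geq 3$.

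Next I would construct the spanning tree explicitly. Pick three distinct neighbors $u_1,u_2,u_3$ of $v$ and let
\[
F=\bigl(V(G),\,\{\{v,u_1\},\{v,u_2\},\{v,u_3\}\}\bigr),
\]
which is a forest inside $G$. Since $G$ is connected, $F$ can be enlarged greedily by adding edges from $E(G)\setminus E(F)$, one at a time, each chosen so as not to create a cycle, until no further edge can be added; the resulting subgraph $T$ is a maximal acyclic spanning subgraph of $G$, hence a spanning tree. By construction $T$ contains all three edges $\{v,u_i\}$, so $\deg_T(v)\geq 3$, and therefore $T$ is not a path, contradicting the hypothesis that every spanning tree of $G$ is a path. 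I do not anticipate any serious obstacle; the only step requiring a moment of care is the greedy extension of the starter forest $F$ to a spanning tree, which is the standard fact that a maximal acyclic spanning subgraph of a connected graph is a spanning tree.
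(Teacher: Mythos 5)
Your proof is correct, and it takes a genuinely different route from the paper. The paper argues by induction on the number of vertices: it invokes its Lemma \ref{l19} (existence of a non-cut vertex, via the block-cut tree) to delete a vertex $v$, checks that the smaller graph $G'$ still has the property that every spanning tree is a path, applies the induction hypothesis to conclude $G'$ is a path or a cycle, and then rules out the cycle case and restricts the neighbours of $v$ to the endpoints of the path. You instead argue directly: if $G$ is neither a path nor a cycle, the classification of connected graphs of maximum degree at most $2$ yields a vertex $v$ of degree at least $3$, and the star on three edges at $v$ is an acyclic subgraph that extends (by the standard maximal-acyclic-subgraph argument) to a spanning tree $T$ with $\deg_T(v)\geq 3$, which cannot be a path. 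Both steps you rely on --- the degree-$\leq 2$ classification and the extension of a forest to a spanning tree --- are standard and correctly applied. Your argument is shorter, avoids induction entirely, and renders Lemma \ref{l19} unnecessary for this purpose; the paper's inductive proof is more laborious but stays within the elementary toolkit it has already set up.
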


\begin{proof}
We will prove it by induction {with respect to the} number of vertices. Let $n$ be {the} number of
vertices, for $n=2$ and $n=3$ it is obviously true. Let it {be} true for $n\geq3$, let $G$ be a graph with $n+1$ vertices such that every spanning tree of $G$ is a path. Let $v \in V(G)$ {be a} vertex, which is not an articulation point,
by lemma \ref{l19} it exists. We denote $G'$ {the} subgraph induced by {the} set of vertices
$V(G)\setminus \{v\}$. $G'$ is connected, we will show {that} every spanning tree of $G'$
is a path. Let {there exist a} spanning tree which is not a path, let $u \in V(G)$ {be a} vertex such that $\{v,u\}\in
E(G)$. Now when we add this edge to {the} spanning tree, we get a spanning tree of $G$,
which is not a path and it is a contradiction. 
By induction hypothesis $G'$ is {a} path or {a} cycle,
we denote $A=\left\{u\in V(G)|\{v,u\}\in E(G)\right\}$. For contradiction we assume $G'$ is {a} cycle and let $u \in A$, in $G'$ 
{be an} edge $e$ such that $u$ is not incident to $e$. Consider {the subgraph $B$ of} $G$, $B=(V(G),E(G')\setminus e \cup \{v,u\})$, and this is {a} 
spanning tree of $G$ which is not a path, contradiction.

Therefore $G'$ is {a} path, let $x$, $y$ {be} endpoints of this path, for contradiction we assume {that there} 
exists some another vertex $u \in A$. Hence $G'$ together with $\{u,v\}$ form a spanning tree which is not a path.
Hence $A\subset \{x,y\} $ and
$A\neq \emptyset$ and that are {the} two cases for $G$, {a path and a cycle}.
\end{proof}

\begin{theorem}\label{v2}

Let $G$ and $H$ {be} connected finite graphs such that
$|V(G)|=|V(H)|$, {then}
\[
h^+_H(G)\leq h^+_{H}(P_{|V(G)|-1}),
\]

moreover, {both sides} are equal {if} and only if $G$ is a path.

\end{theorem}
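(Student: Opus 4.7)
The plan is to derive Theorem~\ref{v2} from Corollary~\ref{do1} together with Lemmas~\ref{l18} and~\ref{l20}, plus one additional argument to rule out cycles. The inequality is exactly Corollary~\ref{do1}, and if $G$ is a path then $G\cong P_{|V(G)|-1}$, so equality is immediate. For the converse direction, suppose $h^+_H(G) = h^+_H(P_{|V(G)|-1})$ and fix a pseudoordering $f$ on $G$ attaining this value.

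First I would push the extremality onto every spanning tree of $G$. For any spanning tree $K$ of $G$, $\rho_G(a,b) \leq \rho_K(a,b)$ for all $a,b \in V(G)$, and Corollary~\ref{do1} applied to the tree $K$ gives $h^+_H(K) \leq h^+_H(P_{|V(G)|-1})$. Hence
\[
h^+_H(P_{|V(G)|-1}) = s_H(f,G) \leq s_H(f,K) \leq h^+_H(K) \leq h^+_H(P_{|V(G)|-1}),
\]
so the whole chain collapses to equalities. In particular $K$ is a tree on which some pseudoordering attains $h^+_H(P_{|V(G)|-1})$, so Lemma~\ref{l18}, whose hypothesis uses the connectedness of $H$, forces $K$ to be a path. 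Since this holds for every spanning tree of $G$, Lemma~\ref{l20} concludes that $G$ is either a path or a cycle.

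The main obstacle is excluding the cycle case. The possibility $|V(G)| = 2$ is trivial since the only connected graph on two vertices is a path, so assume $G = C_n$ with $n \geq 3$. The spanning trees of $C_n$ are exactly the paths $C_n \setminus e$ for $e \in E(C_n)$, and the collapse above yields
\[
\rho_{C_n \setminus e}(f(x),f(y)) = \rho_{C_n}(f(x),f(y))
\]
for every $H$-edge $\{x,y\}$ and every $e \in E(C_n)$. The two arcs joining distinct vertices of $C_n$ have lengths $d$ and $n-d$; if $d < n-d$, the shorter arc contains at least one edge whose removal strictly increases the distance, contradicting the equality for that choice of $e$. So each $H$-edge must map to an antipodal pair, which forces $n$ even and $\rho_{C_n}(f(x),f(y)) = n/2$; in particular $n \geq 4$. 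But $H$ is connected on $n \geq 4$ vertices and therefore has a vertex $x$ of $H$-degree at least two, whose two distinct $H$-neighbors would both have to map to the unique antipode of $f(x)$ in $C_n$, contradicting the injectivity of $f$. This rules out cycles and completes the proof.
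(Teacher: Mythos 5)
Your proof is correct and follows the paper's argument essentially step for step: the inequality from Corollary \ref{do1}, the collapse of the chain of inequalities onto every spanning tree, Lemma \ref{l18} and Lemma \ref{l20} to reduce to the cycle case, and the observation that deleting an edge on the shorter arc joining a non-antipodal pair strictly increases their distance. The only cosmetic difference is the endgame: the paper picks a vertex of $H$-degree at least two, finds an $H$-neighbour at distance less than $n/2$, and contradicts maximality of the sum directly, whereas you first deduce that every $H$-edge must map to an antipodal pair and then contradict the injectivity of $f$ --- both rest on the same facts.
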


\begin{proof}
{The first part} follows from theorem \ref{v1}, let $G$ be a graph, $f$ {be a} pseudoordering such that
\[
s_H(f,G)=h^+_H(G)=h^+_{H}(P_{|V(G)|-1}).
\]
From {the} proof of theorem \ref{v1} we know {that} every spanning tree also {satisfies the} equation above.
Hence{, by}
lemma \ref{l18}, every spanning tree of $G$ is a path. By lemma \ref{l20} $G$ {is a} path or {a} cycle, 
for contradiction we assume, that it is a cycle. We denote $n=|V(G)|$, we will {show that there} are two vertices $v,u \in V(G)$
such that $v\sim_{H,f}u$
and $\rho_G(u,v)<\frac{n}{2}$. 

{Because} $G$ is cycle,
$|V(H)|=n\geq3$ and $H$ is connected {we see that} there is {a} vertex of degree at least $2$. 
Let $v$ {be a} vertex such that $deg_H(v)\geq 2$, there exists at least two vertices 
$u$ such that 
$v \sim_{H,f} u$. {There exists at most one vertex such that $\rho_G(u,v)\geq \frac{n}{2}$}, {hence} at least one of them {satisfies} $\rho_G(u,v)<
\frac{n}{2}$.

%%%%%%%%%%%%%%%%%%%%
%%%%%%%%%%%%%%%%%%%%
%%%%%%%%%%%%%%%%%%%%
%%%%%%%%%%%%%%%%%%%%
%%%%%%%%%%%%%%%%%%%%
%%%%%%%%%%%%%%%%%%%%
%%%%%%%%%%%%%%%%%%%%
%%%%%%%%%%%%%%%%%%%%
Now we connect $v$ and $u$ with {a} shorter path in $G$. Let $e$ {be} some edge on this path,
we define {a} graph 
$\bar{G}=(V(G),E(G) \setminus e)$, it is {a} path, where {every distance is greater or equal} as in $G$.
But
$\rho_G(u,v) <\rho_{\bar{G}}(u,v)$ and then
\[
s_H(f,\bar{G})=s_{H}(f,\bar{G}) > h^+_{H}(P_{|V(G)|-1}),
\]
and this is contradiction with theorem \ref{v1}.
\end{proof}

\section{Conclusion}

When we use {the} calculation from article \cite{2}, where {it is shown that} 
 
\[
h^+(P_{|V(G)|-1})=\left \lfloor\frac{|V(G)|^2}{2} \right \rfloor, \quad t^+(P_{|V(G)|-1})=\left \lfloor\frac{|V(G)|^2}{2} \right
\rfloor-1.
\]
This result is also calculated in \cite{bak} and when we use theorem \ref{v2} for $H=P_{|V(G)|-1}$ and for $H=C_{|V(G)|}$ we get this theorem
from article \cite{2}.

\begin{theorem}\label{v3}
\[
h^+(G) \leq \left \lfloor\frac{|V(G)|^2}{2} \right \rfloor, \quad t^+(G) \leq \left \lfloor\frac{|V(G)|^2}{2} \right
\rfloor-1.
\]
{Moreover}, {both sides} are equal {if} and only if $G$ is a path.
\end{theorem}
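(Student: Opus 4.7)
The plan is to specialize Theorem \ref{v2} to the two choices $H=C_{|V(G)|}$ and $H=P_{|V(G)|-1}$, and then substitute the explicit closed-form values of $h^+(P_{|V(G)|-1})$ and $t^+(P_{|V(G)|-1})$ that are cited from \cite{2}. No new combinatorial argument is needed; everything reduces to an identification of spectra.

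First I would invoke the Remark at the start of the paper: for $|V(G)|\geq 3$ the $C_{|V(G)|}$-Hamiltonian spectrum coincides with the ordinary Hamiltonian spectrum of $G$, so
\[
h^+(G)=h^+_{C_{|V(G)|}}(G),
\]
and for $|V(G)|\geq 2$ the $P_{|V(G)|-1}$-Hamiltonian spectrum coincides with the traceable spectrum, so
\[
t^+(G)=h^+_{P_{|V(G)|-1}}(G).
\]
The two tiny cases $|V(G)|\in\{1,2\}$ can be settled by direct inspection.

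Next, since both $C_{|V(G)|}$ and $P_{|V(G)|-1}$ are connected, Theorem \ref{v2} applies with each of them as $H$ and yields
\[
h^+(G)=h^+_{C_{|V(G)|}}(G)\leq h^+_{C_{|V(G)|}}(P_{|V(G)|-1})=h^+(P_{|V(G)|-1}),
\]
\[
t^+(G)=h^+_{P_{|V(G)|-1}}(G)\leq h^+_{P_{|V(G)|-1}}(P_{|V(G)|-1})=t^+(P_{|V(G)|-1}),
\]
where the last equalities on each line use the same Remark applied with the path $P_{|V(G)|-1}$ taking the role of $G$. Moreover, Theorem \ref{v2} already provides the ``if and only if'' characterization: equality holds in each displayed line exactly when $G$ is a path.

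Finally I would plug in the formulas from \cite{2},
\[
h^+(P_{|V(G)|-1})=\left\lfloor\frac{|V(G)|^2}{2}\right\rfloor,\qquad t^+(P_{|V(G)|-1})=\left\lfloor\frac{|V(G)|^2}{2}\right\rfloor-1,
\]
to obtain the claimed bounds together with the equality characterization. The main thing to be careful about is just the small-$n$ bookkeeping needed to justify the spectrum identifications (for $|V(G)|=2$ the cycle $C_2$ is not available, and for $|V(G)|=1$ neither construction is meaningful); these degenerate cases are trivial and can be verified by inspection, so no genuine obstacle arises.
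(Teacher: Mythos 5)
Your proposal is correct and follows essentially the same route as the paper: specialize Theorem \ref{v2} to $H=C_{|V(G)|}$ and $H=P_{|V(G)|-1}$, identify the corresponding spectra with the Hamiltonian and traceable spectra via the opening Remark, and substitute the closed-form values of $h^+(P_{|V(G)|-1})$ and $t^+(P_{|V(G)|-1})$ from \cite{2}. Your explicit attention to the degenerate cases $|V(G)|\in\{1,2\}$ is a small bookkeeping refinement the paper leaves implicit, but it does not change the argument.
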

Now we can see, that theorem \ref{v2} is generalization of theorem \ref{v3} from article \cite{2}.

\renewcommand{\bibname}{references}

\end{document}